\documentclass[psamsfonts]{amsart}

\usepackage{setspace}
\usepackage{tipa}
\usepackage{mathrsfs}
\usepackage{amsfonts}
\usepackage{amssymb,amsfonts}
\usepackage[all,arc]{xy}
\usepackage{enumerate}
\usepackage{mathrsfs}
\usepackage{yhmath}
\usepackage{float}
\textwidth 150mm \textheight 240mm \oddsidemargin 20pt
\evensidemargin 0pt \topmargin 0cm \headsep 0.3cm

\usepackage{graphicx}
\def\dis{\displaystyle}

\newtheorem{thm}{Theorem}[section]
\newtheorem{cor}[thm]{Corollary}
\newtheorem{prop}[thm]{Proposition}
\newtheorem{lem}[thm]{Lemma}

\theoremstyle{definition}
\newtheorem{defn}[thm]{Definition}

\theoremstyle{remark}
\newtheorem{rem}[thm]{Remark}

\numberwithin{equation}{section}

\bibliographystyle{plain}
\begin{document}

%
\title{On obstacle problem for mean curvature flow with driving force}

\thanks{The work of the first author is partly supported by the Japan Society for the Promotion of Science (JSPS) through the grants No.\,26220702 (Kiban S), No.\,17H01091 (Kiban A), No.\,16H03948 (Kiban B) and No.\,18H05323 (Kaitaku).
The work of the second author is partly supported by NSF grant DMS-1615944.
The third author is the Research Fellow of Japan Society for the Promotion of Science, Number: 17J05160.
Corresponding author: Longjie Zhang.}

\author[Y. Giga]{Yoshikazu Giga}
\address[Y. Giga]{Graduate School of Mathematical Sciences, 
University of Tokyo 3-8-1 Komaba, Meguro-ku, Tokyo, 153-8914, Japan}
\email{labgiga@ms.u-tokyo.ac.jp}

\author[H. V. Tran]{Hung V. Tran}
\address[H. V. Tran]
{Department of Mathematics, 
University of Wisconsin Madison, Van Vleck Hall, 480 Lincoln Drive, Madison, Wisconsin 53706, USA}
\email{hung@math.wisc.edu}

\author[L. Zhang]{Longjie Zhang}
\address[L. Zhang]{Graduate School of Mathematical Sciences, 
University of Tokyo 3-8-1 Komaba, Meguro-ku, Tokyo, 153-8914, Japan}
\email{zhanglj@ms.u-tokyo.ac.jp}

\keywords{mean curvature flow; driving force; obstacle problem; large time behavior; limiting profiles; radially symmetric setting.}

\subjclass[2010]{35A01, 35A02, 35K55, 53C44.}

\begin{abstract}
In this paper, we study an obstacle problem associated with the mean curvature flow with constant driving force. Our first main result concerns interior and boundary regularity of the solution. We then study in details the large time behavior of the solution and obtain the convergence result. In particular, we give full characterization of the limiting profiles in the radially symmetric setting.
\end{abstract}

\maketitle

%
%
%
%
%

\section{Introduction}\large

 In this paper, we study an obstacle problem for level-set forced mean curvature flow equation. We assume further that the surface evolution is described by the mean curvature with constant driving force $A$. Under the assumption, the equation is
\begin{equation}\label{eq:levelset}
u_t=|D u|\text{div}\left(\frac{D u}{|D u|}\right)+A|D u|,\quad \text{in}\ \mathbb{R}^N\times(0,T),
\end{equation}
\begin{equation}\label{eq:initial}
u(x,0)=u_0(x),\quad \text{on}\ \mathbb{R}^N,
\end{equation}
\begin{equation}\label{eq:obs}
\psi^-(x)\leq u(x,t)\leq \psi^+(x),\quad \text{on}\ \mathbb{R}^N\times[0,T).
\end{equation}
Here $T>0$, $\text{supp}\,\psi^{\pm}\subset \overline{\Omega}$ for some open, bounded, smooth domain $\Omega\subset \mathbb{R}^N$. The given positive constant $A$ is called driving force. Moreover we assume $u_0\in C^{1,1}(\mathbb{R}^N)$, and $\psi^-\leq u_0\leq \psi^+$ on $\mathbb{R}^N$, where $u_0$, $\psi^{\pm}$ are all $L$-Lipschitz continuous for some fixed $L>0$.
In this paper, by $C^{1,1}(\mathbb{R}^N)$ we mean the space of all functions on $\mathbb{R}^N$ whose first derivatives are Lipschitz continuous.
For $U\subset \mathbb{R}^N$, we say that a function $a:U \rightarrow \mathbb{R}$ is $L$-Lipschitz continuous if 
$$
|a(x)-a(y)|\leq L|x-y| \quad \text{for all $x,y\in U$.}
$$ 
We postpone the definition of viscosity solutions of (\ref{eq:levelset}), (\ref{eq:initial}), and (\ref{eq:obs}) until Section 2.

\subsection{Main results}

Here we give our main results.

\begin{thm}\label{thm:grad}
Let $u$ be the viscosity solution of {\rm (\ref{eq:levelset}), (\ref{eq:initial})}, and {\rm (\ref{eq:obs})}. Then 
$$
|u(x,t)-u(y,t)|\leq L|x-y|,
$$
and
$$
|u(x,t)-u(x,s)|\leq M|t-s|,
$$
for all $x,y\in\mathbb{R}^N$, $t,s\in(0,T)$. Here $M$ is a constant such that
$$
M>2\left\Vert D^2 u_0 \right\Vert_{L^{\infty}(\mathbb{R}^N)}+A L.
$$
\end{thm}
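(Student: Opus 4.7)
The strategy is to derive both Lipschitz bounds by applying the comparison principle for the obstacle problem (set up earlier in the paper) to explicit barriers built from $u$ itself.

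For the spatial estimate, fix $h\in\mathbb{R}^N$ and set
\[
w^{\pm}(x,t):= u(x+h,t)\pm L|h|.
\]
The PDE \eqref{eq:levelset} is invariant under spatial translations and under addition of constants to the unknown, so the PDE portion is inherited. Using the $L$-Lipschitz property of $\psi^{\pm}$ one checks, for instance, $w^+(x,t)\geq \psi^-(x+h)+L|h|\geq \psi^-(x)$, and at any point where $w^+(x,t)<\psi^+(x)$ one has $u(x+h,t)<\psi^+(x)-L|h|\leq\psi^+(x+h)$, so the PDE supersolution property of $u$ at $(x+h,t)$ transfers to $w^+$ at $(x,t)$. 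Hence $w^+$ is a supersolution of the obstacle problem and, dually, $w^-$ is a subsolution. The $L$-Lipschitz bound on $u_0$ gives $w^-(\cdot,0)\leq u_0\leq w^+(\cdot,0)$, so comparison yields $w^-\leq u\leq w^+$, which is the claimed spatial bound.

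The temporal estimate splits into two steps. First, I would verify that $v^{\pm}(x,t):= u_0(x)\pm Mt$ are, respectively, a super- and a subsolution of the obstacle problem: the obstacle constraints are immediate ($v^+\geq u_0\geq\psi^-$ and $v^-\leq u_0\leq\psi^+$), and the PDE inequality reduces to the pointwise estimate
\[
M\geq \bigl|\mathrm{tr}\bigl((I-\hat p\otimes\hat p)D^2u_0\bigr)+A|Du_0|\bigr|,\qquad \hat p=Du_0/|Du_0|,
\]
which is delivered by the hypothesis on $M$ together with $|Du_0|\leq L$ and the $C^{1,1}$ bound on $u_0$. Comparison then gives $|u(x,t)-u_0(x)|\leq Mt$. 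Second, for the bound in $|t-s|$, fix $h\in(0,T)$ and set $\tilde u(x,\tau):= u(x,\tau+h)$, which solves the obstacle problem on $[0,T-h]$ with initial datum $u(\cdot,h)$. The key observation is that the functions $u\pm Mh$ are themselves sub/supersolutions of the obstacle problem on $[0,T-h]$: the constant shift preserves the PDE; the ``easy'' obstacle is automatic; and if $u+Mh<\psi^+$ at some point, then $u<\psi^+$ there, so the supersolution property of $u$ transfers to $u+Mh$ (and dually for $u-Mh$). Combining this with the first-step bound $u_0-Mh\leq u(\cdot,h)\leq u_0+Mh$ to compare initial data at $\tau=0$, comparison yields $|u(x,\tau+h)-u(x,\tau)|\leq Mh$ on $[0,T-h]$, which is the claimed time-Lipschitz estimate.

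The main technical obstacle is the comparison principle for the two-sided obstacle problem itself, given that \eqref{eq:levelset} is degenerate/singular at $Du=0$; I am assuming this has been established earlier. With it in hand, the remaining work is a sequence of invariance checks, whose subtlest point (used twice above) is that a supersolution of the obstacle problem only needs the PDE inequality where the function lies strictly below the upper obstacle, and dually for a subsolution.
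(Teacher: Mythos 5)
Your strategy is exactly the paper's (translation/time--shift invariance of the PDE plus the comparison principle for the obstacle problem), and the two subsidiary observations you flag --- that the PDE inequality for a supersolution is only tested where the function lies strictly below $\psi^+$, and the two-step structure for the time estimate (initial--layer bound $|u(\cdot,t)-u_0|\le Mt$, then shift) --- are precisely what the paper uses.

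There is one genuine gap, though. In the paper's definition (Section 2), \emph{both} sub- and supersolutions of the obstacle problem must satisfy the two-sided constraint $\psi^-\le\cdot\le\psi^+$ pointwise; the comparison principle (Proposition~\ref{prop:comp}) is only stated for functions in that class. Your barriers $w^+(x,t)=u(x+h,t)+L|h|$, $v^+(x,t)=u_0(x)+Mt$, and $u+Mh$ generally violate the upper constraint $\le\psi^+$ (and dually the lower barriers violate $\ge\psi^-$), so as written they are not supersolutions in the sense used and the comparison principle cannot be invoked. You check the lower constraint for $w^+$ and note that ``the easy obstacle is automatic,'' but you never address the other side. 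The paper repairs this by truncating: it works with $u_z^{+}=(u(\cdot+z,\cdot)+L|z|)\wedge\psi^{+}$, $u_s^{+}=(u(\cdot,\cdot+s)+Ms)\wedge\psi^{+}$, $u_1=(u_0-Mt)\vee\psi^-$, etc. The truncation is harmless: a $\wedge\,\psi^+$ of a function that already satisfies the viscosity inequality where it is below $\psi^+$, and that lies above $\psi^-$, is a supersolution (the PDE test point for the truncated function is also a test point for the untruncated one), and the final comparison $u\le(w^+\wedge\psi^+)\le w^+$ still gives the bound you want. So the fix is short, but the missing truncation is exactly the step that makes the barriers legitimate in this two-sided obstacle framework, and without it the argument does not quite close.
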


As we mention in Section 2 more precisely, the existence of a unique viscosity solution is known by \cite{M}. However, such a global estimate is new although the proof for Lipschitz bound is an adjustment of the proof without obstacles; see \cite{G} for the spatial bound.

\begin{thm}\label{thm:asym}
Let $u$ be the viscosity solution of {\rm (\ref{eq:levelset}), (\ref{eq:initial})}, and {\rm (\ref{eq:obs})}. There exists a $L$-Lipschitz function $v$ in $\mathbb{R}^N$ such that 
$$
u(\cdot,t)\rightarrow v,\quad as\ t\rightarrow \infty,
$$
uniformly in $\mathbb{R}^N$. Here $v$ is a stationary solution of {\rm(\ref{eq:levelset})}, and {\rm(\ref{eq:obs})}.
\end{thm}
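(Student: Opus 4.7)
My plan is to extract a candidate limit via Arzelà-Ascoli, identify it as a stationary solution through the half-relaxed limits, and close the argument with an $L^{\infty}$-contraction of the obstacle-problem semigroup together with a comparison principle for the stationary obstacle problem.

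First, I would use Theorem~\ref{thm:grad} together with the fact that $\operatorname{supp}\psi^{\pm}\subset\overline{\Omega}$ (which forces $u(\cdot,t)\equiv 0$ outside $\overline{\Omega}$) to conclude that $\{u(\cdot,t)\}_{t\ge 0}$ is uniformly bounded and equi-$L$-Lipschitz in $x$, hence precompact in $C(\mathbb{R}^{N})$. Defining
$$
\bar v(x) := \limsup_{t\to\infty} u(x,t), \qquad \underline v(x) := \liminf_{t\to\infty} u(x,t),
$$
one obtains $L$-Lipschitz functions with $\psi^{-}\le\underline v\le\bar v\le\psi^{+}$; by the equi-Lipschitz-in-$t$ bound of Theorem~\ref{thm:grad} they coincide with the full half-relaxed limits in space and time.

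Next I would combine viscosity stability with an $L^{\infty}$-contraction. Applying the standard half-relaxed-limit stability to the time-shifted solutions $u(x,\cdot+t_{n})$ with $t_{n}\to\infty$, and using the Lipschitz-in-$t$ bound so that the limits are independent of the time slot, one gets that $\bar v$ is a viscosity subsolution and $\underline v$ a viscosity supersolution of the stationary obstacle problem
$$
|Dw|\,\mathrm{div}\!\left(\tfrac{Dw}{|Dw|}\right)+A|Dw|=0,\qquad \psi^{-}\le w\le\psi^{+}.
$$
In parallel, I would prove $L^{\infty}$-contraction of the obstacle-problem semigroup $S(t)$. The subtle point is that the shifted function $w+c$ (for $c\ge 0$) may violate the upper obstacle, yet it nonetheless remains a \emph{supersolution} of the obstacle problem: the supersolution PDE inequality is only required where $w+c<\psi^{+}$, where $w<\psi^{+}$ and the supersolution property of $w$ transfers, while $w+c\ge\psi^{-}$ is automatic. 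Standard comparison between the subsolution $u$ and the supersolution $w+c$ (with $c:=\|u_{0}-w_{0}\|_{\infty}$) then gives the nonexpansion $\|S(t)u_{0}-S(t)w_{0}\|_{\infty}\le\|u_{0}-w_{0}\|_{\infty}$.

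The final step is to show $\bar v\equiv\underline v$ via a comparison principle for the stationary obstacle problem. Both functions vanish outside $\overline{\Omega}$, effectively imposing a Dirichlet-type condition on $\partial\Omega$, and a doubling-the-variables argument in $\overline{\Omega}$ with the usual $\varepsilon|x-y|^{2}$ perturbation --- adapted to the gradient-direction degeneracy of the mean-curvature operator --- should yield $\bar v\le\underline v$, hence $\bar v=\underline v=:v$. Uniform convergence $u(\cdot,t)\to v$ on $\mathbb{R}^{N}$ then follows from precompactness, and $v$ is stationary by stability. The hard part is this stationary comparison: degenerate ellipticity combined with obstacle constraints makes the doubling delicate. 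As a backup I would exploit the contraction to argue that every element of the $\omega$-limit set must be a fixed point of $S(h)$ for each $h>0$ --- equivalently a stationary solution --- and then use non-expansion of $S(t)$ to pin down a single limit.
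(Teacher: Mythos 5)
Your main route fails at its keystone: there is \emph{no} comparison principle for the stationary obstacle problem
$$
|Dw|\,\mathrm{div}\!\left(\frac{Dw}{|Dw|}\right)+A|Dw|=0,\qquad \psi^-\le w\le\psi^+,
$$
because the operator has no zeroth-order monotonicity in $w$. The paper records this explicitly (Remark~\ref{rem:stationary}(4)): in the radial example of Theorem~\ref{thm:asymcircle} the whole family $\psi_C=\psi^+\wedge C$, $0\le C\le\lambda(R-(N-1)/A)$, consists of stationary solutions with the same boundary behaviour, so $\bar v\le\underline v$ cannot be forced by any doubling argument, however carefully you treat the gradient degeneracy. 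Concluding $\bar v\equiv\underline v$ this way is not ``delicate''; it is false in general.

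Your backup plan is closer to what actually works, but it too has a gap. The $L^\infty$ non-expansion of $S(t)$ (your argument that $w\pm c$ truncated by the obstacles remains a super/subsolution is exactly the right observation, and the paper uses precisely this at the end of its proof) implies that once $u(\cdot,t)$ is $\varepsilon$-close to a \emph{stationary solution} $v$ it stays $\varepsilon$-close forever; that pins down the limit. But the prior claim that every $\omega$-limit point is a fixed point of $S(h)$ does not follow from non-expansion alone — it is a LaSalle-type statement that needs a Lyapunov functional. Your half-relaxed-limit step only yields that $\bar v$ is a \emph{sub}solution and $\underline v$ a \emph{super}solution of the stationary problem, which is strictly weaker than what you need and precisely what the unusable comparison principle was supposed to upgrade. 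The paper supplies the missing piece with the energy estimate
$$
\int_0^\infty\!\!\int_{\mathbb{R}^N}(u^\varepsilon_t)^2\,dx\,dt\le C
$$
obtained from the Lyapunov functional $I^\varepsilon(t)=\int\sqrt{\varepsilon^2+|Du^\varepsilon|^2}\,dx - A\int u^\varepsilon\,dx$ (using that $u^\varepsilon_t=0$ a.e.\ on the coincidence sets); passing to the limit gives $\int_0^\infty\!\int(u_t)^2<\infty$, whence any Arzel\`a--Ascoli subsequential limit of $u(\cdot,\cdot+t_k)$ has $v_t=0$ and is therefore an \emph{exact} stationary solution. Combined with your non-expansion observation, that finishes the proof. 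Without an energy estimate of this type (or an equivalent substitute), the argument does not close.
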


A similar result is proved for the Neumann problem in a convex domain for the level-set mean curvature flow equation \cite{GOS} without obstacles. We shall adjust their proof for our setting.

We aim at characterizing the limiting profile $v$ in term of given initial condition $u_0$, and obstacles $\psi^\pm$. This is  a challenging task, and at this moment, we are able to get full characterization in the radially symmetric case. This is done by a careful study of radial solution of (\ref{eq:levelset})--(\ref{eq:obs}). A key observation is that (\ref{eq:levelset}) becomes a first order equation with singularity at the center of radial symmetricity. Here is our statement.

\begin{thm}\label{thm:asymcircle}
Let $\Omega=B_R(O)$, and 
$$
\psi^{+}(x)=\left\{
\begin{array}{lcl}
 \lambda {\rm dist}(x,\partial \Omega)=\lambda(R-|x|),\quad x\in\Omega,\\
0,\quad x\in \mathbb{R}^N\setminus \Omega,
\end{array}
\right.
$$
where $\lambda >0$ is given.

Assume $\psi^-<0$ in $\Omega$, and $\psi^-(x)$ is radial, that is,  it depends only on $|x|$. Moreover, assume that $u_0$ is radial, and $\psi^-\leq u_0\leq \psi^+$. The following holds.
\\
{\rm (1)} If $R\leq (N-1)/A$, then $u(\cdot,t)\rightarrow 0$ uniformly in $\mathbb{R}^N$, as $t\rightarrow\infty$.
\\
{\rm (2)} If $R>(N-1)/A$, then for $B=\max\limits_{(N-1)/A\leq |x|\leq R}u_0\geq0$,
\[
\text{$u(\cdot,t)\rightarrow \psi_B$ uniformly in $\mathbb{R}^N$, as $t\rightarrow\infty$, where, for $C\in\mathbb{R}$,}
\]
$$
\psi_C:=\psi^+\wedge C=\min\{\psi^+,C\}.
$$
\end{thm}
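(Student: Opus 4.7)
The plan is to exploit radial symmetry to reduce (\ref{eq:levelset}) to a first-order transport equation, then identify the limit from Theorem \ref{thm:asym} via characteristic analysis and explicit barriers. Uniqueness of the viscosity solution combined with the $O(N)$-invariance of the equation and of the data forces $u(x,t)=U(|x|,t)$ to be radial. Away from $r=0$ and $U_r=0$, (\ref{eq:levelset}) reduces to the first-order equation
\[
U_t = U_r\Bigl(\frac{N-1}{r}-A\Bigr),
\]
whose characteristics $\dot r = A-(N-1)/r$ govern the motion of the spherical level sets. Writing $r^{*}:=(N-1)/A$, spheres of radius $r<r^{*}$ contract to a point in finite time while spheres of radius $r>r^{*}$ expand. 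By Theorem \ref{thm:asym}, $u(\cdot,t)\to v$ uniformly for some Lipschitz $v$ which inherits radial symmetry, respects the obstacles, and vanishes outside $\Omega$; the task is to identify $v$.

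\emph{Part (1): $R\le r^{*}$.} I will show $v\equiv 0$. For any $c>0$, the closed set $\{u_0\ge c\}$ is compactly contained in $\Omega=B_R$ (since $\psi^{+}$ vanishes on $\partial\Omega$ and $u_0\le \psi^{+}$), so $\{u_0\ge c\}\subset B_{R'}$ for some $R'<R\le r^{*}$; the outer boundary of $\{u\ge c\}$ follows $\dot r=A-(N-1)/r<0$ inside $B_{r^{*}}$ and contracts to $0$ in finite time, so $u<c$ eventually. A parallel argument for $\{u\le c\}$ with $c<0$ (whose inner boundary always moves inward along $\dot r = -A-(N-1)/r<0$) gives $u>c$ eventually. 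These statements, made rigorous by sliding $\psi^{\pm}$ along the characteristic flows to produce explicit radial sub/super-solutions, yield $u(\cdot,t)\to 0$ uniformly.

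\emph{Part (2): $R>r^{*}$.} I will prove matching bounds $\limsup u\le\psi_B$ and $\liminf u\ge\psi_B$. For the upper bound, fix $c>B$; by the definition of $B$, $\{u_0\ge c\}\subset B_{r^{*}}$, and inside $B_{r^{*}}$ characteristics shrink any such set to the origin in finite time, giving $u<c$ eventually; combined with $u\le\psi^{+}$ and letting $c\downarrow B$ this yields $\limsup u\le\psi^{+}\wedge B=\psi_B$. For the lower bound I would first verify that $\psi_B$ is itself a radial stationary viscosity solution of the obstacle problem: the only nontrivial test is at the corner $\{|x|=R-B/\lambda\}$, where $\psi_B$ is concave and the obstacle is active on the outer side, so the test reduces to a routine first-order inequality. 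Next, for each $c\in(0,B)$ pick $r_c\in[r^{*},R]$ with $u_0(r_c)\ge c$ (possible since $B$ is attained on $[r^{*},R]$ and $c<B$). On a small spherical shell around $r_c$, $u_0\ge c$; the inner boundary of $\{u\ge c\}$ contracts to the origin while the outer boundary expands along $\dot r=A-(N-1)/r>0$ until pinned by $\psi^{+}$ at radius $R-c/\lambda$. A radial subsolution realizing this propagation yields $u(x,t)\ge c$ on $|x|\le R-c/\lambda$ eventually; passing $c\uparrow B$ gives $\liminf u\ge\psi_B$.

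\emph{Main difficulty.} The principal obstacle is to translate the characteristic/level-set picture into rigorous viscosity statements in the presence of (i) the singular coefficient $1/r$ at $r=0$, (ii) the corner of $\psi_B$, and (iii) the one-sided obstacle, which requires invoking the comparison principle for the obstacle problem from \cite{M}. I would address this by approximating the target profiles by smooth radial barriers $u_\varepsilon^{\pm}$ that track the characteristic flow up to an $\varepsilon$-error, sandwich $u$, and converge to the claimed $v$ as $\varepsilon\to 0$.
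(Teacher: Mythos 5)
Your proposal follows the same high-level route as the paper: reduce to a radial first-order problem, classify radial stationary profiles as $\psi^+\wedge C$, and identify the limit by squeezing $u$ between explicit radial barriers built from the characteristic picture $\dot r = A-(N-1)/r$, using the obstacle-problem comparison principle from \cite{M}. Two technical points, however, are glossed over. First, the displayed reduction $U_t=U_r\bigl((N-1)/r-A\bigr)$ is valid only where $U_r\le 0$; the general radial form of (\ref{eq:levelset}) is $U_t=(N-1)U_r/r+A|U_r|$, and the sign of $U_r$ matters for deciding which way level spheres move and for which side of the sub/super-solution test the inequality goes. Second, every barrier you propose has a conical kink (at a plateau radius and at the edge of the active set $\{\psi^+=\psi_B\}$), and the viscosity test there is not automatic: the paper needs Lemma~\ref{lem:sub} (from \cite{GMT}) to extract the one-sided gradient $s$ at a corner maximum or minimum and bound the curvature term by $(N-1)s/|x_0|$. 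Your phrase ``a routine first-order inequality'' is exactly the step this lemma makes rigorous, and without it the sketch is incomplete.

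Where you genuinely deviate from the paper is the Part~(2) lower bound. You want to prove $\liminf u\ge\psi_B$ directly by expanding a plateau subsolution of height $c$ out to radius $R-c/\lambda$ and sending $c\uparrow B$, thereby getting matching bounds with no need to classify stationary solutions. The paper does \emph{not} do this: its subsolution $\underline u$ converges only to a cone $\underline\psi$ with outer slope $-L$ (not $-\lambda$), so one obtains $\underline\psi\le v\le\psi_B$ with $\underline\psi\ne\psi_B$ in general, and then Proposition~\ref{prop:symmetricstation} (radial stationary solutions are exactly the $\psi_C$) is invoked to conclude $v=\psi_B$ from the sandwich. Your more ambitious subsolution would require two stages: a preliminary barrier (like the paper's $\underline u$) to first generate a plateau at height $c$ of some radius $r_0\ge(N-1)/A$ (since $u_0$ need not have a plateau), followed by a second-stage subsolution with outer slope $-\lambda$ and plateau radius $\rho(t)$ increasing with $\dot\rho<A-(N-1)/\rho$, pinned by the obstacle at $\rho=R-c/\lambda$. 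You have described neither stage in enough detail to check the viscosity inequality at the moving kink where the slope jumps, nor handled the singular origin carefully. This is the gap that Proposition~\ref{prop:symmetricstation} closes cheaply; if you want to avoid it, you must carry out the two-stage subsolution explicitly, including the corner tests via Lemma~\ref{lem:sub}.
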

\begin{figure}[htbp]
	\begin{center}
            \includegraphics[height=8.0cm]{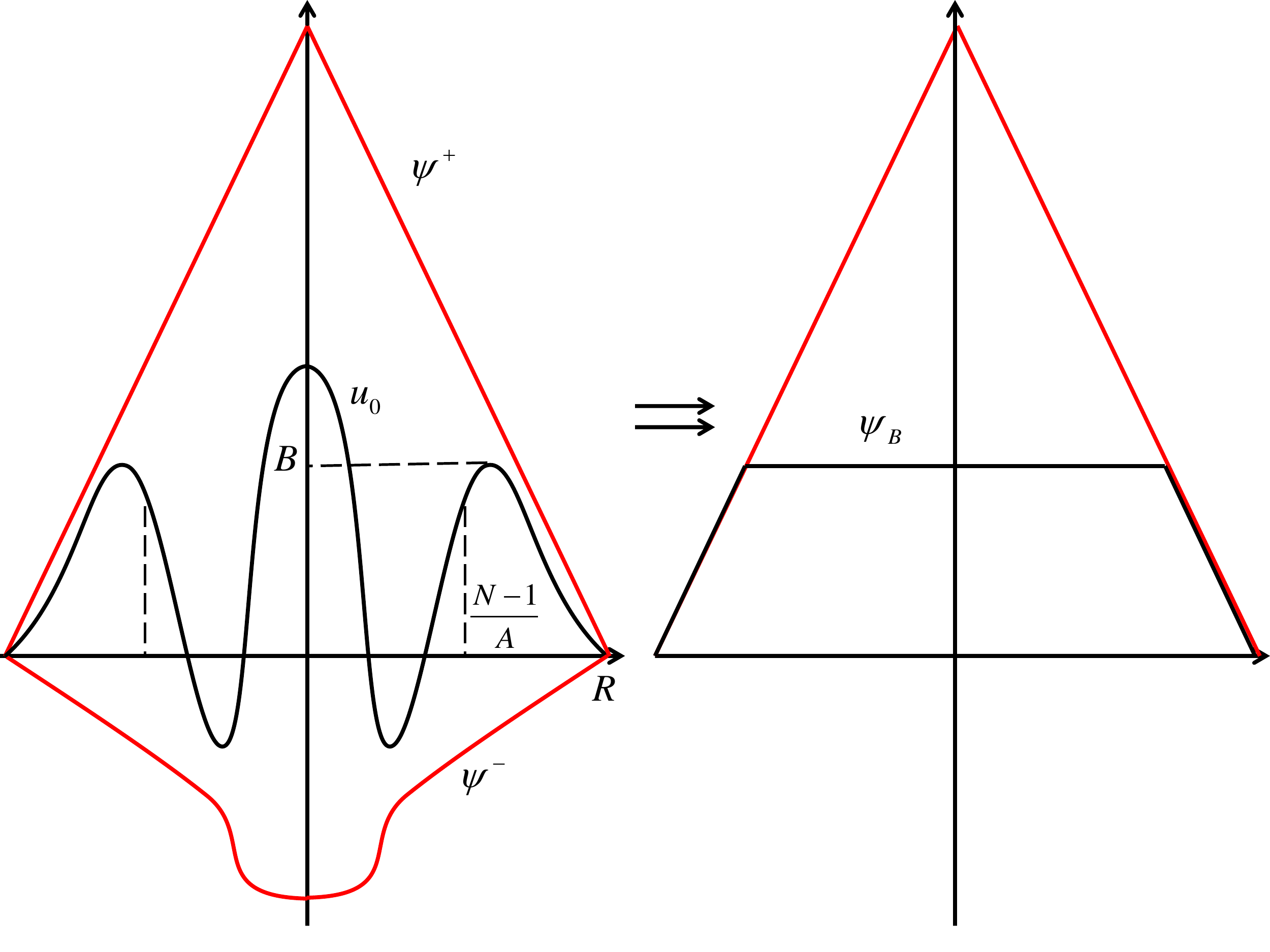}
		\vskip 0pt
		\caption{Case (2) of Theorem \ref{thm:asymcircle}}
        \label{fig:maintheorem}
	\end{center}
\end{figure}

\begin{rem}\label{rem:stationary}
We have the following observations.

\noindent (1) Obviously, $\psi^+(x)$ is radial symmetric.

\noindent (2) In Theorem \ref{thm:asymcircle}, noting that $u_0=0$ on $\partial B_R(O)$, and $u_0\leq \psi^+$, we have $0\leq B\leq \lambda(N-1)/A$. 

\noindent (3) We will see that, for all $0\leq C\leq \lambda(N-1)/A$, $\psi_C$ are all (viscosity) subsolutions of (\ref{eq:levelset}) (without obstacle). Moreover, $\psi_C$ are all (viscosity) radially symmetric stationary solutions of (\ref{eq:levelset}), and (\ref{eq:obs}).

\noindent (4) We consider
 \begin{equation}
0=|D u|\text{div}\left(\frac{D u}{|D u|}\right)+A|D u|,\quad \text{in}\ \Omega,\tag{\ref{eq:levelset}$e$}
\end{equation}
the corresponding elliptic problem of (\ref{eq:levelset}). It is not hard to see that the comparison principle for (\ref{eq:levelset}$e$) does not hold as the equation (\ref{eq:levelset}$e$) is not monotone in $u$. Indeed, obviously $0$ is a solution of (\ref{eq:levelset}$e$). As mentioned in (3), for all $0\leq C\leq \lambda(N-1)/A$, $\psi_C$ are (viscosity) subsolutions of (\ref{eq:levelset}$e$) with $\psi_C=0$ on $\partial \Omega$. 
\end{rem}

\subsection{Motivation and Background}

 In 1994, Sternberg and Ziemer \cite{SZ} consider the following problem
$$
\left\{
\begin{array}{lcl}
u_t-|D u|{\rm div}\dis{\left(\frac{D u}{|D u|}\right)}=0,\quad (x,t)\in\Omega\times(0,T),\\
u(x,0)=u_0(x),\quad x\in \overline{\Omega},\\
u(x,t)=g(x),\quad (x,t)\in\partial \Omega\times[0,T).
\end{array}
\right.
$$
Under the assumption that domain $\Omega$ is mean convex, they show that the solution exists globally in time, is unique, and 
\begin{equation}\label{eq:estimatelip}
\left\Vert u\right\Vert_{C^{0,1}(\overline{\Omega}\times[0,\infty))}\leq C.
\end{equation}
Moreover, they also obtain large time behavior result of $u(x,t)$ as $t\rightarrow\infty$.  We are tempting to derive similar results for generalized motion by mean curvature with driving force. However, global estimate (\ref{eq:estimatelip}) may fail for the solution of 
\begin{equation}
\left\{
\begin{array}{lcl}
u_t-|D u|{\rm div}\dis{\left(\frac{D u}{|D u|}\right)}-A|D u|=0,\quad (x,t)\in\Omega\times(0,T),\\
u(x,0)=u_0(x),\quad x\in \overline{\Omega},\\
u(x,t)=g(x),\quad (x,t)\in\partial \Omega\times[0,T),
\end{array}
\right.\tag{*}
\end{equation}
if the boundary condition is fulfilled in classical sense. For instance, let $\Omega=B_2(O)\subset \mathbb{R}^2$, $g(x)=0$ on $\partial \Omega$, and $A=1$. Consider 
$$
\underline{\psi}(x,t)=\left\{
\begin{array}{lcl}
1,\quad |x|\leq 2-\frac{1}{2}e^{-\frac{1}{6}t},\\
2e^{\frac{1}{6}t}(2-|x|),\quad 2-\frac{1}{2}e^{-\frac{1}{6}t}<|x|\leq 2.\\
\end{array}
\right.
$$ 
As we will see in Appendix, $\underline{\psi}$ is a subsolution of (*), and satisfies
$$
\left\Vert\underline{\psi}\right\Vert_{C^{0,1}(\overline{\Omega}\times[0,\infty))}=\infty.
$$
Therefore, as long as $u_0\geq \underline{\psi}(\cdot,0)$, by the comparison principle, the solution $u$ also satisfies
$$
\left\Vert u\right\Vert_{C^{0,1}(\overline{\Omega}\times[0,\infty))}=\infty,
$$
 provided that the boundary value of $u$ agrees with $g$.

For these reasons, we study the problem with obstacle instead of Dirichlet problem. In 2014, Mercier and  Novaga \cite{MN} study the mean curvature flow with obstacle in classical sense. In 2016, Mercier  \cite{M} gives the well-posed result for the problem (\ref{eq:levelset})--(\ref{eq:obs}) in the viscosity sense. In this research, they prove the comparison principle and give the existence, uniqueness results. We introduce them in Section 2.

\noindent {\bf Background.} It is expected that a proper understanding of the Dirichlet problem is an obstacle formulation. Consider a curve evolving by the forced curvature flow equation $V=-\kappa+A$, where $V$ is the normal velocity and $-\kappa$ is the curvature in the direction of the normal. Suppose both ends are fixed at the points $P_1$ and $P_2$. We take an obstacle functions $\psi^+ \geq 0 \geq \psi^-$ such that it vanishes only on $P_1$ and $P_2$. Let us explain naively. We denote the curve a part of the boundary of $\{u>0\}$ which consists of at least two connected component ``front" and ``back". Then the front level set of solution of (1.1) is expected to give a solution of Dirichlet problem for $V = -\kappa+1$. The difficulty of the Dirichlet problem is that the curve does not divide into two parts. This is a reason we mention ``front" and ``back" of the level-set. Such a problem has been arisen when one discusses spiral growths. In \cite{OTG1, OTG2}, a spiral growth by $V=-\kappa+1$ is discussed for the Neumann boundary condition by using a modified level-set method. It seems to be possible to discuss the Dirichlet problem by using this obstacle approach.

 The level set method for mean curvature flow in viscosity solution frame work was developed independently by Chen, Giga and Goto \cite{CGG}, and Evans, Spruck \cite{ES}. They prove the viscosity solution for level set method exists and is unique. Recently, there are some researches considering the mean curvature flow with driving force. In 2016, Giga, Mitake, and Tran \cite{GMT} consider a crystal growth phenomenon in both vertical and horizontal directions. Indeed, the horizontal direction growth is our mean curvature flow with driving force; see also \cite{ICM}, \cite{GMOT} and \cite{GMT} for a survey and more developments. In our case here, there is no source term, hence, no vertical growth. In 2017, Zhang consider the mean curvature flow with driving force by level set method and give some criteria to judge whether the zero set is fattening or not (see \cite{Z, Z2}).

This paper is organized as following. In Section 2, we give the notion of viscosity solutions to the obstacle problem and some basic results. In Section 3, we prove the gradient estimates and give the large time behavior result. In Section 4, we give a full characterization of the limiting profile in the radially symmetric setting. To the best of our knowledge, this result is new in the literature. It is still an open problem on analyzing the limit in general setting.  

\section{Preliminaries}\large

In this section, we introduce the notion of viscosity solution to the obstacle problem (\ref{eq:levelset})--(\ref{eq:obs}) and give some related results.

 Let $F:\mathbb{R}^N\setminus \{0\}\times \mathcal{S}_N\rightarrow \mathbb{R}$ ($\mathcal{S}_N$ is the set of square symmetric matrices of size  $N$) be such that
$$
F(p,X)=\text{trace}\left(\left(I-\frac{p\otimes p}{|p|^2}\right)X\right)+A|p|.
$$
Denote by, for $(p,X)\in\mathbb{R}^N\times\mathcal{S}_N$,
$$
F_*(p,X)=\liminf_{(q,Y)\rightarrow(p,X)}F(q,Y),
$$
and
$$
F^*(p,X)=\limsup_{(q,Y)\rightarrow(p,X)}F(q,Y).
$$
The above limits are $(q,Y)\rightarrow(p,X)$ in $\mathbb{R}^N\times\mathbb{R}^{N^2}$.

\begin{defn} A function $u:\mathbb{R}^N\times [0,\infty)\rightarrow \mathbb{R}$ is said to be a (viscosity) subsolution of the problem (\ref{eq:levelset})--(\ref{eq:obs}) if

$\cdot$ $u$ is upper semicontinuous (usc),

$\cdot$ for all $(x,t)\in\mathbb{R}^N\times[0,T)$, $\psi^-(x)\leq u(x,t)\leq \psi^+(x)$,

$\cdot$ for all $x\in \mathbb{R}^N$, $u(x,0)\leq u_0(x)$,

$\cdot$ for smooth function $\varphi$, if $(\hat{x},\hat{t})\in \mathbb{R}^N\times(0,T)$ is a maximizer of $u-\varphi$, and $u(\hat{x},\hat{t})>\psi^-(\hat{x})$, then, at $(\hat{x},\hat{t})$,
$$
\varphi_t-F^*(D\varphi,D^2\varphi)\leq 0.
$$

\noindent Similarly, a function $u:\mathbb{R}^N\times [0,\infty)\rightarrow \mathbb{R}$ is said to be a (viscosity) supersolution of the problem (\ref{eq:levelset})--(\ref{eq:obs}) if

$\cdot$ $u$ is lower semicontinuous (usc),

$\cdot$ for all $(x,t)\in\mathbb{R}^N\times[0,T)$, $\psi^-(x)\leq u(x,t)\leq \psi^+(x)$,

$\cdot$ for all $x\in \mathbb{R}^N$, $u(x,0)\geq u_0(x)$,

$\cdot$ for smooth function $\varphi$, if $(\hat{x},\hat{t})\in \mathbb{R}^N\times(0,T)$ is a minimizer of $u-\varphi$, and $u(\hat{x},\hat{t})<\psi^+(\hat{x})$, then, at $(\hat{x},\hat{t})$,
$$
\varphi_t-F_*(D\varphi,D^2\varphi)\geq 0.
$$

\noindent Finally, $u$ is said to be a (viscosity) solution of (\ref{eq:levelset})--(\ref{eq:obs}) if $u$ is both a viscosity subsolution and a supersolution.
\end{defn}

\begin{prop}[Comparison principle]\label{prop:comp}
We assume $u$ is a subsolution and $v$ is a supersolution of {\rm(\ref{eq:levelset})--(\ref{eq:obs})}, respectively. Then $u\leq v$ in $\mathbb{R}^N\times(0,T)$.
\end{prop}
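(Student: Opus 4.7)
The plan is to argue by contradiction using the Crandall--Ishii doubling-of-variables technique, with two adjustments tailored to this problem: a quartic spatial penalty $|x-y|^4/(4\varepsilon)$ to tame the singularity of $F$ at $Du=0$, and a Lipschitz argument on $\psi^\pm$ to rule out the degenerate situation where the maximizing point lies on an obstacle, in which case the subsolution/supersolution PDE inequality is unavailable.

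Suppose for contradiction that $M := \sup_{\mathbb{R}^N\times[0,T)}(u-v) > 0$. Since $\psi^\pm$ are bounded and compactly supported, $u$ and $v$ are uniformly bounded. For small parameters $\alpha, \varepsilon, \delta, \gamma > 0$, introduce
$$
\Phi(x,y,t,s) := u(x,t) - v(y,s) - \frac{|x-y|^4}{4\varepsilon} - \frac{(t-s)^2}{2\delta} - \alpha(|x|^2+|y|^2) - \frac{\gamma}{T-t}.
$$
Standard coercivity and penalization arguments ensure that $\Phi$ attains a positive maximum at some interior point $(\hat x,\hat y,\hat t,\hat s)$ with $\hat t,\hat s\in(0,T)$, that $|\hat x - \hat y|^4/\varepsilon \to 0$, $(\hat t-\hat s)^2/\delta \to 0$, and $\alpha(|\hat x|^2+|\hat y|^2)\to 0$ along an appropriate iterated limit, while $\Phi(\hat x,\hat y,\hat t,\hat s)\to M$.

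The key step that uses the obstacle structure is to rule out the two degenerate cases. If $u(\hat x,\hat t) = \psi^-(\hat x)$, then combining $v(\hat y,\hat s)\geq \psi^-(\hat y)$ with the $L$-Lipschitz continuity of $\psi^-$ gives
$$
u(\hat x,\hat t) - v(\hat y,\hat s) \leq \psi^-(\hat x) - \psi^-(\hat y) \leq L|\hat x-\hat y|,
$$
whose right-hand side vanishes as $\varepsilon\to 0$, contradicting the fact that $\Phi(\hat x,\hat y,\hat t,\hat s)\geq M/2>0$ for small parameters. The symmetric case $v(\hat y,\hat s) = \psi^+(\hat y)$ is excluded by Lipschitz continuity of $\psi^+$. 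Consequently, both the subsolution inequality for $u$ and the supersolution inequality for $v$ are available at the maximizer.

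From this point the argument is the standard comparison proof for level-set forced mean curvature flow. The Crandall--Ishii lemma produces parabolic jets with common gradient $p = |\hat x-\hat y|^2(\hat x-\hat y)/\varepsilon + O(\alpha)$ and Hessians $X,Y$ with $X\leq Y + O(\alpha)I$; subtracting the two viscosity inequalities and noting that the time-penalty cross-terms cancel yields
$$
\frac{\gamma}{(T-\hat t)^2} \leq F^*(p,X) - F_*(p,Y) + O(\alpha).
$$
When $p\neq 0$, degenerate ellipticity via the projection $I - p\otimes p/|p|^2$ bounds the right-hand side by $O(\alpha)$; the borderline case $p=0$ corresponds to $\hat x = \hat y$, in which case the quartic penalty contributes no Hessian and both $X,Y$ are themselves $O(\alpha)$, making $F^*(0,X)-F_*(0,Y) = O(\alpha)$ as well. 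Letting $\varepsilon,\delta\to 0$ then $\alpha\to 0$ gives $\gamma/T^2\leq 0$, a contradiction. The main obstacle is compatibility between the obstacle constraints and the singular structure of $F$ at $p=0$: the Lipschitz-obstacle argument above is what eliminates the former, and the quartic penalty is what eliminates the latter; the driving term $A|Du|$ introduces no essential new difficulty since it contributes equally on both sides.
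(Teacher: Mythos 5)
The paper does not prove this proposition itself; it refers to Mercier \cite{M}. Your sketch reconstructs what is essentially the expected argument: standard doubling of variables with a quartic spatial penalty for the singular mean curvature operator, combined with the obstacle-specific observation that if the subsolution touches $\psi^-$ (or the supersolution touches $\psi^+$) at the doubled maximizer, then the $L$-Lipschitz continuity of the obstacle forces $u(\hat x,\hat t)-v(\hat y,\hat s)\le L|\hat x-\hat y|\to 0$, contradicting positivity of the supremum, so the PDE inequalities are always available at the maximizer. This is the right mechanism and the proof is correct in substance.

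Two small points deserve tightening. First, you should also rule out $\hat t=0$ or $\hat s=0$; this follows from the initial conditions $u(\cdot,0)\le u_0\le v(\cdot,0)$, semicontinuity of $u,v$, and the same ``difference is Lipschitz-small'' reasoning as for the obstacle cases. Second, in the borderline case $\hat x=\hat y$ you write that ``both $X,Y$ are themselves $O(\alpha)$''; the Ishii matrix inequality only yields the \emph{one-sided} bounds $X\le C\alpha I$ and $Y\ge -C\alpha I$, not norm bounds. These suffice, however: $F^*(0,X)=\sup_{|e|=1}\operatorname{trace}((I-e\otimes e)X)\le C\alpha(N-1)$ and likewise $F_*(0,Y)\ge -C\alpha(N-1)$, so $F^*(0,X)-F_*(0,Y)=O(\alpha)$ still holds. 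Note also that when $\hat x=\hat y\neq 0$ the actual gradients $p\pm 2\alpha\hat x$ are nonzero and antiparallel, so the projections coincide and the driving contributions $A|p_u|$ and $A|p_v|$ cancel exactly; this subcase therefore causes no new difficulty, consistent with your remark that the forcing term is harmless.
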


\begin{thm}[Well-posedness]\label{thm:exuni}
Under our assumptions, {\rm(\ref{eq:levelset})--(\ref{eq:obs})} has a unique solution.
\end{thm}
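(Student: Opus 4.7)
The plan is to extract uniqueness directly from Proposition \ref{prop:comp} and to produce existence by Perron's method combined with explicit barriers built from the initial data and the obstacles. Uniqueness is immediate: if $u_1$ and $u_2$ are two viscosity solutions, each is simultaneously a subsolution and a supersolution, so applying Proposition \ref{prop:comp} twice gives $u_1\leq u_2$ and $u_2\leq u_1$ in $\mathbb{R}^N\times(0,T)$.

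For existence, my first step is to construct a subsolution $\underline{w}$ and a supersolution $\overline{w}$ of (\ref{eq:levelset})--(\ref{eq:obs}) with $\underline{w}(\cdot,0)=\overline{w}(\cdot,0)=u_0$. Because $u_0\in C^{1,1}(\mathbb{R}^N)$ and is $L$-Lipschitz, the two affine-in-time functions $u_0(x)\pm Mt$ are respectively a super- and a subsolution of the unconstrained equation (\ref{eq:levelset}) as soon as $M>2\|D^2 u_0\|_{L^{\infty}}+AL$, since $|F(Du_0,D^2u_0)|\leq \text{const}(N)\|D^2u_0\|_{L^{\infty}}+AL$. Truncating with the obstacles, set
$$
\underline{w}(x,t):=\max\{\psi^-(x),\,u_0(x)-Mt\},\qquad \overline{w}(x,t):=\min\{\psi^+(x),\,u_0(x)+Mt\}.
$$
Since $\psi^-\leq u_0\leq \psi^+$, both barriers coincide with $u_0$ at $t=0$; moreover $\psi^+$ and $\psi^-$ automatically satisfy the obstacle side of the definitions (the PDE inequality is not required where the value touches the obstacle), so the $\min$/$\max$ operations preserve the supersolution/subsolution property.

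With these barriers in hand, I would apply Perron's method in the viscosity framework by defining
$$
u(x,t):=\sup\bigl\{w(x,t)\ :\ \underline{w}\leq w\leq \overline{w},\ w\ \text{is a subsolution of }(\ref{eq:levelset})\text{--}(\ref{eq:obs})\bigr\}.
$$
Standard envelope arguments show that the upper semicontinuous envelope $u^*$ is a subsolution, while a bumping argument at a point where the lower semicontinuous envelope $u_*$ fails the supersolution inequality leads to a contradiction with maximality. The comparison principle from Proposition \ref{prop:comp} then forces $u^*\leq u_*$, so $u=u^*=u_*$ is continuous and a bona fide viscosity solution, and the squeeze $\underline{w}\leq u\leq \overline{w}$ together with $\underline{w}(\cdot,0)=\overline{w}(\cdot,0)=u_0$ delivers the initial condition.

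The main difficulty is the handling of the obstacle at the coincidence sets $\{u=\psi^{\pm}\}$: the PDE inequality in the definition of sub/supersolution is only required off these sets, so one must verify that the Perron closure and the bumping step do not degenerate when the candidate touches an obstacle. This is technical but standard, and is precisely the content of Mercier's well-posedness theorem in \cite{M}; an alternative path, also exploited in \cite{M}, is to penalize the obstacle constraint, solve the unconstrained (singular) equation, and pass to the limit using viscosity stability together with the comparison principle already recorded in Proposition \ref{prop:comp}.
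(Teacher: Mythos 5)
The paper offers no proof of this theorem: it merely states it and refers to Mercier \cite{M}, noting that both the comparison principle and well-posedness are quite standard. Your sketch --- uniqueness from two applications of Proposition \ref{prop:comp}, and existence via Perron's method squeezed between the truncated barriers $\max\{\psi^-,\,u_0-Mt\}$ and $\min\{\psi^+,\,u_0+Mt\}$, with the delicate coincidence-set verification deferred to \cite{M} --- is a correct outline of that standard argument and is consistent with what the paper has in mind.
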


The comparison principle and well-posedness of (\ref{eq:levelset})--(\ref{eq:obs}) are quite standard. We refer to \cite{M}.
To derive convergence results, it is convenient to consider the approximate problem of (\ref{eq:levelset})--(\ref{eq:obs}) by considering, for $\varepsilon>0$, $T>0$,

\begin{equation}
u^{\varepsilon}_t=\sqrt{\varepsilon^2+|D u^{\varepsilon}|^2}\,\text{div}\left(\frac{D u^{\varepsilon}}{\sqrt{\varepsilon^2+|D u^{\varepsilon}|^2}}\right)+A\sqrt{\varepsilon^2+|D u^{\varepsilon}|^2},\quad \text{in}\ \mathbb{R}^N\times(0,T),\tag{\ref{eq:levelset}$\varepsilon$}
\end{equation}
\begin{equation}
u^{\varepsilon}(x,0)=u_0(x),\quad \text{on}\ \mathbb{R}^N,\tag{\ref{eq:initial}$\varepsilon$}
\end{equation}
\begin{equation}
(\psi^{\varepsilon})^-(x)\leq u^{\varepsilon}(x,t)\leq (\psi^{\varepsilon})^+(x),\quad \text{on}\ \mathbb{R}^N\times[0,T).\tag{\ref{eq:obs}$\varepsilon$}
\end{equation}
Here $(\psi^{\varepsilon})^{\pm}$ are smooth, $\rm{supp}(\psi^{\varepsilon})^{\pm}\subset \Omega^{\varepsilon}$, and $\psi^{\varepsilon \pm}\rightarrow \psi^{\pm}$ uniformly in $\mathbb{R}^N$, where
$$
\Omega^{\varepsilon}=\{x\in\mathbb{R}^N\mid d(x,\Omega)<\varepsilon\}.
$$
Moreover, assume $(\psi^{\varepsilon})^{\pm}$ are $L_{\varepsilon}$-Lipschitz continuous, and $L_{\varepsilon}\leq K$ for some constant $K$.

Let $v^{\varepsilon}=u^{\varepsilon}/{\varepsilon}$. Then $v^{\varepsilon}$ satisfies

\begin{equation}\label{eq:graf}
v^{\varepsilon}_t=\sqrt{1+|D v^{\varepsilon}|^2}\,\text{div}\left(\frac{D v^{\varepsilon}}{\sqrt{1+|D v^{\varepsilon}|^2}}\right)+A\sqrt{1+|D v^{\varepsilon}|^2},\quad \text{in}\  \mathbb{R}^N\times(0,T),
\end{equation}
\begin{equation}\label{eq:initialgra}
v^{\varepsilon}(x,0)=u_0(x)/\varepsilon,\quad \text{on}\ \mathbb{R}^N,
\end{equation}
\begin{equation}\label{eq:obsgra}
(\psi^{\varepsilon})^-(x)/\varepsilon \leq v^{\varepsilon}(x,t)\leq (\psi^{\varepsilon})^+(x)/\varepsilon,\quad \text{in}\ \mathbb{R}^N\times[0,T).
\end{equation}

\begin{prop}\label{prop:compgra}
We assume $u$ is a subsolution and $v$ is a supersolution of {\rm(\ref{eq:graf})--(\ref{eq:obsgra})}, respectively, and $u(\cdot,0)\leq v(\cdot,0)$ in $\mathbb{R}^N$. Then $u\leq v$ in $\mathbb{R}^N\times(0,T)$.
\end{prop}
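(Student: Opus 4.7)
The plan is to adapt the standard doubling-of-variables argument behind Proposition \ref{prop:comp} (and the comparison statement in \cite{M}) to the regularized equation (\ref{eq:graf}). The task is in fact easier than for (\ref{eq:levelset}), because the operator $G(p,X):=\operatorname{tr}\!\bigl((I - \tfrac{p\otimes p}{1+|p|^2})X\bigr) + A\sqrt{1+|p|^2}$ is continuous on all of $\mathbb{R}^N\times \mathcal{S}_N$, degenerate elliptic with coefficient matrix whose smallest eigenvalue is $1/(1+|p|^2)>0$, so there is no singularity at $p=0$ and no need to pass to semicontinuous envelopes.

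First I would localize. Since $\operatorname{supp}(\psi^{\varepsilon})^{\pm}\subset\Omega^{\varepsilon}$, outside $\Omega^{\varepsilon}$ one has $u\leq (\psi^{\varepsilon})^+/\varepsilon = 0 = (\psi^{\varepsilon})^-/\varepsilon\leq v$. Thus if $M:=\sup_{\mathbb{R}^N\times[0,T)}(u-v)>0$ (the contradiction hypothesis), the supremum is attained on the compact set $\overline{\Omega^{\varepsilon}}\times[0,T)$. I introduce, for small $\alpha,\beta,\eta>0$,
$$
\Phi(x,y,t,s)=u(x,t)-v(y,s)-\frac{|x-y|^2}{2\alpha^2}-\frac{(t-s)^2}{2\beta^2}-\frac{\eta}{T-t},
$$
with $\eta$ small enough that $\sup\Phi>0$ still holds, and pick a maximizer $(\hat x,\hat y,\hat t,\hat s)\in (\overline{\Omega^{\varepsilon}})^2\times[0,T)^2$. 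Standard penalty estimates give $|\hat x-\hat y|/\alpha+|\hat t-\hat s|/\beta\to 0$ as $\alpha,\beta\to 0$, while the assumption $u(\cdot,0)\leq v(\cdot,0)$ together with the blow-up term $\eta/(T-t)$ keeps $\hat t,\hat s$ bounded away from $0$ and $T$.

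The crucial step is verifying that the obstacle clauses in the viscosity definition are inactive at $(\hat x,\hat t)$ and $(\hat y,\hat s)$. Since $u(\hat x,\hat t)-v(\hat y,\hat s)\geq M/2$ for all $\alpha,\beta$ small, I have $u(\hat x,\hat t)>v(\hat y,\hat s)\geq(\psi^{\varepsilon})^-(\hat y)/\varepsilon$, and uniform continuity of $(\psi^{\varepsilon})^-$ combined with $|\hat x-\hat y|\to 0$ upgrades this to $u(\hat x,\hat t)>(\psi^{\varepsilon})^-(\hat x)/\varepsilon$; symmetrically $v(\hat y,\hat s)<(\psi^{\varepsilon})^+(\hat y)/\varepsilon$. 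The Crandall--Ishii lemma applied to $\Phi$ then yields symmetric matrices $X\leq Y$ together with time derivatives satisfying $\tau_1-\tau_2=\eta/(T-\hat t)^2>0$ and a common gradient $p_\alpha=(\hat x-\hat y)/\alpha^2$, and the (now unrestricted) viscosity inequalities give
$$
\tau_1-G(p_\alpha,X)\leq 0,\qquad \tau_2-G(p_\alpha,Y)\geq 0.
$$
Subtracting and using monotonicity of $G$ in its matrix argument yields $\eta/(T-\hat t)^2\leq G(p_\alpha,X)-G(p_\alpha,Y)\leq 0$, the desired contradiction.

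The only real technicality I anticipate is the obstacle-removal step: one must make the strict inequalities $u(\hat x,\hat t)>(\psi^{\varepsilon})^-(\hat x)/\varepsilon$ and $v(\hat y,\hat s)<(\psi^{\varepsilon})^+(\hat y)/\varepsilon$ uniform in $\alpha,\beta$ before invoking the viscosity inequalities, using only $M>0$ and the uniform continuity of the smooth approximants $(\psi^{\varepsilon})^{\pm}$. Once that is secured, the remainder is the textbook viscosity-comparison template for uniformly well-posed degenerate parabolic equations, so the proposition reduces to essentially the same machinery already invoked for Proposition \ref{prop:comp}.
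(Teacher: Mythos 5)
Your argument is correct and is precisely the ``standard comparison argument'' that the paper itself declines to write out, deferring instead to Mercier--Novaga \cite{MN} (their $A=0$ case) with the remark that repeating those arguments handles $A>0$; the key points you identify --- deactivating the obstacle constraints at the doubled maximum via $M>0$ plus uniform continuity of $(\psi^{\varepsilon})^{\pm}$, and the cancellation of the first-order term $A\sqrt{1+|p|^2}$ because both viscosity inequalities are tested with the same gradient $p_\alpha$ --- are exactly what make the cited template go through. No substantive difference from the paper's (implicit) proof.
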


\begin{prop}\label{prop:exuniquegra}
Problem {\rm(\ref{eq:graf})--(\ref{eq:obsgra})} has a unique continuous solution $v^{\varepsilon}$, and furthermore, $v^{\varepsilon}\in C(\mathbb{R}^N\times[0,T))\cap C^{1,1}(\mathbb{R}^N\times(0,T))$.
\end{prop}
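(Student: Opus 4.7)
The plan is to prove uniqueness from the comparison principle, produce existence by Perron's method adapted to the two-obstacle problem, and derive interior $C^{1,1}$ regularity from classical parabolic obstacle theory once a spatial Lipschitz bound is in hand. The key structural feature is that equation (\ref{eq:graf}) is strictly parabolic---the eigenvalues of its diffusion matrix $I-(Dv\otimes Dv)/(1+|Dv|^2)$ lie in $[(1+|Dv|^2)^{-1},1]$---and becomes uniformly parabolic with smooth coefficients as soon as $|Dv^\varepsilon|$ is bounded.

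Uniqueness is immediate from Proposition \ref{prop:compgra}: two continuous solutions each play both roles and share the initial data $u_0/\varepsilon$, hence agree. For existence I would set
\[
v^\varepsilon(x,t):=\sup\bigl\{w(x,t):w\text{ subsolution of (\ref{eq:graf})--(\ref{eq:obsgra}), }w(\cdot,0)\leq u_0/\varepsilon\bigr\}.
\]
The class is non-empty (taking $w\equiv(\psi^\varepsilon)^-/\varepsilon$, for which the subsolution PDE condition is vacuous because $w$ coincides with the lower obstacle) and uniformly bounded above by $(\psi^\varepsilon)^+/\varepsilon$. Standard viscosity arguments (closure under suprema plus Perron bumping) show that the upper semicontinuous envelope $(v^\varepsilon)^*$ is a subsolution and the lower semicontinuous envelope $(v^\varepsilon)_*$ is a supersolution, and both envelopes take the value $u_0/\varepsilon$ at $t=0$ by a barrier argument using the Lipschitz continuity of $u_0/\varepsilon$ and of $(\psi^\varepsilon)^\pm/\varepsilon$. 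Proposition \ref{prop:compgra} then forces $(v^\varepsilon)^*=(v^\varepsilon)_*$, yielding a continuous solution.

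For $C^{1,1}$ regularity on $\mathbb{R}^N\times(0,T)$, I would first establish a spatial Lipschitz bound for $v^\varepsilon$ by comparing $v^\varepsilon(\cdot+h,\cdot)$ with $v^\varepsilon(\cdot,\cdot)+C_\varepsilon|h|$, where $C_\varepsilon$ dominates the common Lipschitz constants of $u_0/\varepsilon$ and $(\psi^\varepsilon)^\pm/\varepsilon$. With $|Dv^\varepsilon|\leq C_\varepsilon$, equation (\ref{eq:graf}) is uniformly parabolic with smooth coefficients. Since the obstacles are smooth and strictly separated in the interior of their supports, classical parabolic two-obstacle theory---implemented by penalizing the PDE with smooth terms $-\beta_\delta(w-(\psi^\varepsilon)^+/\varepsilon)+\beta_\delta((\psi^\varepsilon)^-/\varepsilon-w)$, solving the resulting uniformly parabolic quasilinear Cauchy problems for smooth $w^\delta$ via Ladyzhenskaja--Solonnikov--Uralceva, and extracting $\delta$-independent $W^{2,p}_{\mathrm{loc}}$ bounds before letting $\delta\to 0$---then yields the interior $C^{1,1}$ regularity.

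The main obstacle will be the last step: producing second-derivative bounds on the penalized approximants uniform in $\delta$. For a single smooth obstacle this is classical (a maximum-principle argument on $|D^2w^\delta|$ works because the penalty has a definite sign and the obstacle has bounded second derivatives). The two-sided case reduces locally to the one-sided Bensoussan--Lions analysis because $(\psi^\varepsilon)^-<(\psi^\varepsilon)^+$ on the interior of their supports, so only one obstacle can be active in a small neighborhood of any given contact point, completing the argument.
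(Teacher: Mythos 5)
Your existence and uniqueness argument---Perron's method together with Proposition \ref{prop:compgra}---is the same route the paper takes, following Mercier and Novaga \cite{MN}; the paper extends their $A=0$ case to $A>0$ by ``repeating these standard arguments,'' which is exactly what you sketch. The divergence is in the $C^{1,1}$ regularity. The paper simply invokes \cite[Theorem~4.1]{PS}, whereas you propose to redo the penalization/Ladyzhenskaja--Solonnikov--Uralceva construction directly. That is a reasonable alternative, but as written it has a gap: the localization to the one-sided Bensoussan--Lions setting rests on the claim that $(\psi^\varepsilon)^- < (\psi^\varepsilon)^+$ strictly in the interior of the supports, which is not among the hypotheses---only $\psi^- \leq u_0 \leq \psi^+$ and $\mathrm{supp}\,\psi^{\pm}\subset\overline{\Omega}$ are assumed, so the two obstacles may coincide on an interior set. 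Near such contact both penalty terms can fire simultaneously and your reduction to ``only one obstacle is active in a small neighborhood'' breaks down. (Where the obstacles coincide identically the solution is pinched to a $C^{1,1}$ function and regularity is trivial, but a neighborhood of that coincidence set still needs a dedicated argument.) A second, more technical point: since (\ref{eq:graf}) is quasilinear, the $\delta$-uniform $W^{2,p}_{\mathrm{loc}}$ estimate for the penalized approximants is not a straightforward maximum-principle bound on $|D^2 w^{\delta}|$---the diffusion coefficients depend on $Dw^{\delta}$, so one first needs a $\delta$-independent interior gradient H\"older bound before the linear $W^{2,p}$ theory with frozen coefficients applies. The paper's one-line citation sidesteps both issues at the cost of transparency; your more self-contained route would be fine once those two repairs are made.
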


In \cite{MN}, Mercier and Novaga give the well-posedness for  problem (\ref{eq:graf})--(\ref{eq:obsgra}) with $A=0$ in the viscosity sense by Perron's method and the usual comparison principle. By repeating these standard arguments, the well-posed result for problem (\ref{eq:graf})--(\ref{eq:obsgra}) with $A>0$ holds. For the regularity, using \cite[Theorem 4.1]{PS}, we deduce that $v^{\varepsilon}\in C^{1,1}(\mathbb{R}^N\times(0,T))$. We omit the details here.

Propositions \ref{prop:compgra} and \ref{prop:exuniquegra} give us the following results immediately.

\begin{thm}\label{thm:compgra}
We assume $u$ is a subsolution and $v$ is a supersolution of {\rm(\ref{eq:levelset}$\varepsilon$)--(\ref{eq:obs}$\varepsilon$)}, respectively. Then $u\leq v$ in $\mathbb{R}^N\times(0,T)$.
\end{thm}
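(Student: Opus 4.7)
The plan is to transfer the comparison principle from the graph problem (\ref{eq:graf})--(\ref{eq:obsgra}) to the $\varepsilon$-problem (\ref{eq:levelset}$\varepsilon$)--(\ref{eq:obs}$\varepsilon$) via exactly the linear rescaling $w:=u/\varepsilon$ already recorded in the paragraph preceding Proposition~\ref{prop:compgra}. Concretely, set $\tilde u:=u/\varepsilon$ and $\tilde v:=v/\varepsilon$; I would first verify that $\tilde u$ is a viscosity subsolution and $\tilde v$ a viscosity supersolution of (\ref{eq:graf})--(\ref{eq:obsgra}), and then invoke Proposition~\ref{prop:compgra}.

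For the rescaling I would use $\sqrt{\varepsilon^{2}+|Du|^{2}}=\varepsilon\sqrt{1+|D\tilde u|^{2}}$ together with $u_{t}=\varepsilon\tilde u_{t}$, $Du=\varepsilon D\tilde u$, and $D^{2}u=\varepsilon D^{2}\tilde u$; after dividing by $\varepsilon>0$, equation (\ref{eq:levelset}$\varepsilon$) for $u$ coincides term-by-term with equation (\ref{eq:graf}) for $\tilde u$. The obstacle inequality $(\psi^{\varepsilon})^{-}\leq u\leq(\psi^{\varepsilon})^{+}$ rescales to $(\psi^{\varepsilon})^{-}/\varepsilon\leq\tilde u\leq(\psi^{\varepsilon})^{+}/\varepsilon$, and the one-sided initial conditions built into the sub-/supersolution definition rescale to $\tilde u(\cdot,0)\leq u_{0}/\varepsilon\leq\tilde v(\cdot,0)$. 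Since $u\mapsto u/\varepsilon$ is a positive linear bijection, it preserves upper/lower semicontinuity and the locations of maxima/minima, and for any smooth test function $\varphi$ touching $u$ from above (resp.\ below) at $(\hat x,\hat t)$, the smooth function $\varphi/\varepsilon$ touches $\tilde u$ from above (resp.\ below) at the same point; substituting into the viscosity inequalities then carries them over verbatim.

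Having set all this up, the conclusion is immediate: Proposition~\ref{prop:compgra} yields $\tilde u\leq\tilde v$ in $\mathbb{R}^{N}\times(0,T)$, and multiplying back by $\varepsilon>0$ gives $u\leq v$, as required. I do not expect a substantive obstacle here; the statement is, as the paper itself remarks, an \emph{immediate} corollary of the graph-level comparison because the $\varepsilon$-equation together with its initial and obstacle data is exactly invariant under the scaling $u\mapsto u/\varepsilon$. The only minor point warranting a line is the equivalence of the activation conditions $u(\hat x,\hat t)>(\psi^{\varepsilon})^{-}(\hat x)$ and $\tilde u(\hat x,\hat t)>(\psi^{\varepsilon})^{-}(\hat x)/\varepsilon$ (and analogously on the $\psi^{+}$ side), which is trivial since $\varepsilon>0$.
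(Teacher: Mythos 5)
Your proposal is correct and is precisely the argument the paper has in mind: the paper introduces the change of variable $v^{\varepsilon}=u^{\varepsilon}/\varepsilon$ just before Proposition~\ref{prop:compgra} and then states that Propositions~\ref{prop:compgra} and~\ref{prop:exuniquegra} give Theorems~\ref{thm:compgra} and~\ref{thm:exuniquegra} ``immediately.'' You have simply spelled out that rescaling step, including the transfer of the viscosity tests, the obstacle constraints, and the initial condition, which is exactly what is intended.
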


\begin{thm}\label{thm:exuniquegra}
Problem {\rm(\ref{eq:levelset}$\varepsilon$)--(\ref{eq:obs}$\varepsilon$)} has a unique solution $u^{\varepsilon}\in C(\mathbb{R}^N\times[0,T))\cap C^{1,1}(\mathbb{R}^N\times(0,T))$.
\end{thm}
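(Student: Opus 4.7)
The plan is to obtain Theorem \ref{thm:exuniquegra} as a direct consequence of Proposition \ref{prop:exuniquegra} via the rescaling $v^{\varepsilon}=u^{\varepsilon}/\varepsilon$ that is already set up in the text. Concretely, I would first verify the bijective correspondence
$$
u^{\varepsilon} \text{ solves } (\ref{eq:levelset}\varepsilon)\text{--}(\ref{eq:obs}\varepsilon) \iff v^{\varepsilon}:=u^{\varepsilon}/\varepsilon \text{ solves } (\ref{eq:graf})\text{--}(\ref{eq:obsgra}).
$$
This is a routine check: the differential operator
$$
\mathcal{L}[w]=\sqrt{\varepsilon^{2}+|Dw|^{2}}\,\mathrm{div}\!\left(\frac{Dw}{\sqrt{\varepsilon^{2}+|Dw|^{2}}}\right)+A\sqrt{\varepsilon^{2}+|Dw|^{2}}
$$
is positively $1$-homogeneous under scaling of $w$ by a positive constant in the sense that $\mathcal{L}[\varepsilon v]$ equals $\varepsilon$ times the right-hand side of (\ref{eq:graf}) evaluated at $v$, and the same factor $\varepsilon$ appears on the left after dividing $u^{\varepsilon}_{t}$ by $\varepsilon$. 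The initial and obstacle conditions transform in the obvious linear way.

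Given this correspondence, existence and uniqueness of $u^{\varepsilon}$ follow from Proposition \ref{prop:exuniquegra} applied to the rescaled initial datum $u_{0}/\varepsilon$ and rescaled obstacles $(\psi^{\varepsilon})^{\pm}/\varepsilon$, all of which satisfy the hypotheses (Lipschitz, $\psi^{-}\le u_{0}\le \psi^{+}$). The regularity $v^{\varepsilon}\in C(\mathbb{R}^{N}\times[0,T))\cap C^{1,1}(\mathbb{R}^{N}\times(0,T))$ supplied by Proposition \ref{prop:exuniquegra} transfers to $u^{\varepsilon}=\varepsilon v^{\varepsilon}$ since multiplication by the fixed constant $\varepsilon$ preserves both continuity and the $C^{1,1}$-property.

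The only conceptual point to be careful about is the viscosity sense: since the scaling $w\mapsto \varepsilon w$ is a smooth monotone transformation with constant factor, a smooth test function $\varphi$ touching $u^{\varepsilon}$ at $(\hat{x},\hat{t})$ from above (resp.\ below) corresponds to $\varphi/\varepsilon$ touching $v^{\varepsilon}$ from above (resp.\ below), and the obstacle clauses $u^{\varepsilon}(\hat{x},\hat{t})>(\psi^{\varepsilon})^{-}(\hat{x})$ transform into $v^{\varepsilon}(\hat{x},\hat{t})>(\psi^{\varepsilon})^{-}(\hat{x})/\varepsilon$. Thus the viscosity sub/supersolution properties are preserved by the rescaling. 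I expect no real obstacle; the only step deserving mild attention is making sure the obstacle condition in the definition of viscosity sub/supersolutions is handled consistently under the rescaling, which is immediate once the correspondence is written out. Theorem \ref{thm:compgra} is obtained analogously from Proposition \ref{prop:compgra}, and in fact is used implicitly to ensure uniqueness at the $u^{\varepsilon}$ level.
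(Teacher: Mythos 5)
Your proposal is correct and follows essentially the same route as the paper: the text states that Propositions \ref{prop:compgra} and \ref{prop:exuniquegra} give Theorems \ref{thm:compgra} and \ref{thm:exuniquegra} immediately via the rescaling $v^{\varepsilon}=u^{\varepsilon}/\varepsilon$ already introduced. You have simply spelled out the routine verification (scaling invariance of the operator, transformation of obstacles, initial data, and test functions) that the paper leaves implicit.
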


\begin{lem}\label{lem:stable}
Assume $u^{\varepsilon}$ is the unique solution of {\rm (\ref{eq:levelset}$\varepsilon$)--(\ref{eq:obs}$\varepsilon$)} for each $\varepsilon>0$, and there exists $u$ such that
$$
u^{\varepsilon}\rightarrow u,\quad \text{as}\ \varepsilon\rightarrow 0,
$$
uniformly in $\mathbb{R}^N\times[0,T)$.

Then $u$ is the unique viscosity solution of {\rm (\ref{eq:levelset})--(\ref{eq:obs})}.
\end{lem}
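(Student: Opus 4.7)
The plan is to verify each clause of the viscosity solution definition for $u$ by passing to the limit in the corresponding clause for $u^\varepsilon$, and then to invoke Proposition~\ref{prop:comp} for uniqueness. Continuity of $u$ is immediate as a uniform limit of continuous functions. The obstacle inequalities $\psi^-\leq u\leq \psi^+$ and the initial trace $u(\cdot,0)=u_0$ follow by passing to the limit in $(\psi^\varepsilon)^-\leq u^\varepsilon\leq (\psi^\varepsilon)^+$ and in $u^\varepsilon(\cdot,0)=u_0$, using the uniform convergences $u^\varepsilon\to u$ and $(\psi^\varepsilon)^\pm\to\psi^\pm$.

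Next I would verify the subsolution property. Take $\varphi\in C^\infty$ and assume $(\hat x,\hat t)\in\mathbb R^N\times(0,T)$ is a strict local maximum of $u-\varphi$ with $u(\hat x,\hat t)>\psi^-(\hat x)$. A standard perturbation argument produces a sequence $(x_\varepsilon,t_\varepsilon)\to(\hat x,\hat t)$ of local maxima of $u^\varepsilon-\varphi$. By continuity of $\psi^-$ together with the uniform convergences, $u^\varepsilon(x_\varepsilon,t_\varepsilon)>(\psi^\varepsilon)^-(x_\varepsilon)$ for all sufficiently small $\varepsilon$, so the lower obstacle is inactive at that point. Hence (\ref{eq:levelset}$\varepsilon$) holds at $(x_\varepsilon,t_\varepsilon)$ in the viscosity sense and yields
\[
\varphi_t(x_\varepsilon,t_\varepsilon)\leq F^\varepsilon\bigl(D\varphi(x_\varepsilon,t_\varepsilon),D^2\varphi(x_\varepsilon,t_\varepsilon)\bigr),
\]
where $F^\varepsilon(p,X):=\text{trace}\bigl(\bigl(I-\tfrac{p\otimes p}{\varepsilon^2+|p|^2}\bigr)X\bigr)+A\sqrt{\varepsilon^2+|p|^2}$.

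The main difficulty lies in passing to the limit $\varepsilon\to 0$ in this inequality at points where $D\varphi(\hat x,\hat t)=0$, since $F^\varepsilon$ does not converge to $F^*$ in the naive half-relaxed sense at $\{p=0\}$. Away from this singular set, stability is immediate because $F^\varepsilon\to F=F^*$ locally uniformly, so the inequality passes at once. At $\{p=0\}$ I would invoke the standard reduction of admissible test functions for singular geometric equations of Chen--Giga--Goto type, which allows one to restrict to test functions satisfying also $D^2\varphi(\hat x,\hat t)=0$; for such $\varphi$, $F^\varepsilon(D\varphi(x_\varepsilon,t_\varepsilon),D^2\varphi(x_\varepsilon,t_\varepsilon))\to 0=F^*(0,0)$, so $\varphi_t(\hat x,\hat t)\leq F^*(D\varphi,D^2\varphi)(\hat x,\hat t)$ follows. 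The supersolution verification is entirely symmetric, with minima in place of maxima, $\psi^+$ in place of $\psi^-$, and $F_*$ in place of $F^*$. Uniqueness of $u$ is then immediate from Proposition~\ref{prop:comp}.
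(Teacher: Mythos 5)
Your proposal is correct and follows essentially the same strategy as the paper: approximate by the $\varepsilon$-regularized solutions, extract convergent test-point sequences for a test function $\varphi$ touching $u$ where the relevant obstacle is inactive, and pass to the limit in the viscosity inequality, with uniqueness from Proposition~\ref{prop:comp}. You are somewhat more careful than the paper at the singular set $\{D\varphi=0\}$, where you invoke the standard restricted-test-function reduction of Chen--Giga--Goto type; the paper simply writes ``letting $\varepsilon\to 0$'' without addressing this point, so your extra step is a legitimate (and welcome) filling-in of detail rather than a different route.
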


\begin{proof}

We first show that $u$ is a supersolution. Let $\varphi$ be a smooth test function such that, at some point $(\hat{x},\hat{t})\in\mathbb{R}^N\times(0,T)$, $u(\hat{x},\hat{t})<\psi^+(\hat{x})$, and 
$$
(u-\varphi)(\hat{x},\hat{t})<(u-\varphi)(x,t),\quad (x,t)\in\mathbb{R}^N\times(0,T)\setminus\{(\hat{x},\hat{t})\}.
$$
Then there exists a neighborhood $V\subset\mathbb{R}^N\times[0,T)$ of $(\hat{x},\hat{t})$ such that, for $(x,t)\in V$,
\begin{equation}\label{eq:stales}
u^{\varepsilon}(x,t)< (\psi^{\varepsilon})^+(x).
\end{equation}
Assume 
$$
\min\limits_{\mathbb{R}^N\times(0,T)}(u^{\varepsilon}-\varphi)=(u^{\varepsilon}-\varphi)(x^{\varepsilon},t^{\varepsilon}).
$$
By a standard argument (\cite[Lemma 2.2.5]{G}), $(x^{\varepsilon},t^{\varepsilon})\rightarrow(\hat{x},\hat{t})$, as $\varepsilon\rightarrow0$, by passing to a subsequence if necessary. Thanks to (\ref{eq:stales}), for $\varepsilon>0$ small enough, the viscosity supersolution test gives that
$$
\left(\varphi_t-\sqrt{\varepsilon^2+|D \varphi|^2}\,\text{div}\left(\frac{D \varphi}{\sqrt{\varepsilon^2+|D\varphi|^2}}\right)+A\sqrt{\varepsilon^2+|D \varphi|^2}\big)\right)(x^{\varepsilon},t^{\varepsilon})\geq 0.
$$
Letting $\varepsilon\rightarrow 0$, we have
$$
\big(\varphi_t-F_*(D\varphi,D^2\varphi)\big)(\hat{x},\hat{t})\geq 0.
$$
The proof of subsolution property is similar to the above, and hence, is omitted.
\end{proof}

\section{Lipschitz bounds and large time profiles}\large

In this section, we prove Theorems \ref{thm:grad} and \ref{thm:asym}. 

\begin{proof}[Proof of Theorem \ref{thm:grad}] The proof of spatial Lipschitz bounds is a simple adjustment of that without obstacle; see e.g., \cite{G}.

Denote 
$$
u_z^{+}(x,t)=(u(x+z,t)+L|z|)\wedge\psi^{+}(x),
$$
$$
u_z^{-}(x,t)=(u(x+z,t)-L|z|)\vee\psi^{-}(x),
$$
for $x,z\in\mathbb{R}^N$, $t\in[0,T)$. We claim $u_z^{+}$ is a supersolution and $u_z^{-}$ is a subsolution of (\ref{eq:levelset})--(\ref{eq:obs}), respectively. Once we have this claim, Proposition \ref{prop:comp} shows that 
$$
u_z^{-}(x,t)\leq u(x,t)\leq u_z^{+}(x,t),\quad\ \text{for\ all}\ (x,t)\in\mathbb{R}^N\times(0,T). 
$$
Thus,
$$
u(x+z,t)-L|z|\leq u(x,t)\leq u(x+z,t)+L|z|.
$$
Consequently, for every $x,y\in\mathbb{R}^N$, $t\in[0,T)$,
$$
|u(x,t)-u(y,t)|\leq L|x-y|.
$$

We only prove the claim for $u_z^{+}$. First, we note 
$$
u(x+z,t)+L|z|\geq \psi^-(x+z)+L|z|\geq \psi^-(x).
$$
Consequently, 
\begin{equation}\label{eq:obstasup}
\psi^-(x)\leq u_z^{+}(x,t)\leq \psi^+(x),\quad\ \text{for\ all}\ (x,t)\in\mathbb{R}^N\times[0,T).
\end{equation}
Since the initial data $u_0$ is $L$-Lipschitz,
\begin{equation}\label{eq:initialsup}
u(x+z,0)+L|z|=u_0(x+z)+L|z|\geq u_0(x).
\end{equation}
Then we get $u_z^+(\cdot,0)\geq u_0$ in $\mathbb{R}^N$. Obviously, $w_z^{+}(x,t):=u(x+z,t)+L|z|$ satisfies equation (\ref{eq:levelset}). Combining (\ref{eq:obstasup}), (\ref{eq:initialsup}), and the definition of viscosity supersolution, $u_z^{+}$ is a viscosity supersolution of (\ref{eq:levelset})--(\ref{eq:obs}).

Next we consider 
$$
u_s^+(x,t)=(u(x,t+s)+Ms)\wedge\psi^+,
$$
$$
u_s^-(x,t)=(u(x,t+s)-Ms)\vee\psi^-,
$$
$(x,t)\in \mathbb{R}^N\times[0,s]$, for $0\leq s<T$. We claim $u_s^+$ is a supersolution and $u_s^-$ is a subsolution of (\ref{eq:levelset})--(\ref{eq:obs}), respectively. If we have this claim, then by the comparison principle,
$$
u_s^-\leq u\leq u_s^+.
$$
Consequently, 
$$
u(x,t+s)-Ms\leq u(x,t)\leq u(x,t+s)+Ms.
$$
Therefore, 
$$
|u(x,t)-u(x,r)|\leq M|t-r|,
$$
for all $x\in\mathbb{R}^N$, $t,r\in(0,T)$. 

Next we only prove the claim for $u_s^+$. Seeing the choice of $M$, it is easy to prove $u_1(x,t)=(u_0(x)-M t)\vee\psi^-$ is a subsolution of (\ref{eq:levelset})--(\ref{eq:obs}). By the comparison principle,
$$
u(x,t)\geq u_1(x,t)\geq u_0(x)-Mt.
$$
This implies 
\begin{equation}\label{eq:timevinitial}
u_s^{+}(x,0)=(u(x,s)+Ms)\wedge\psi^+\geq u_0(x).
\end{equation}
Obviously, 
\begin{equation}\label{eq:timevobs}
\psi^+\geq  u_s^{+}(x,t)=(u(x,t+s)+Ms)\wedge\psi^+\geq \psi^-.
\end{equation}

Note $h_s^{+}(x,t):=u(x,t+s)+Ms$ satisfies equation (\ref{eq:levelset}). Combining (\ref{eq:timevinitial}), (\ref{eq:timevobs}), and the definition of viscosity supersolution, $u_s^{+}$ is a viscosity supersolution of (\ref{eq:levelset})--(\ref{eq:obs}).
\end{proof}
Next we prove Theorem \ref{thm:asym}.
\begin{proof}[Proof of Theorem \ref{thm:asym}]
We divide the proof into four steps.

{\bf Step 1.} $u^{\varepsilon}$ are Lipschitz continuous for all $\varepsilon \in (0,1)$. Moreover,
$$
|u^{\varepsilon}(x,t)-u^{\varepsilon}(y,t)|\leq L_{\varepsilon}|x-y|\leq K|x-y|,
$$
and
$$
|u^{\varepsilon}(x,t)-u^{\varepsilon}(x,r)|\leq M|t-r|,
$$
for $x,y\in\mathbb{R}^N$, $t,r\in(0,T)$. 

By constructing subsolution and supersolution as in Theorem \ref{thm:grad}, we can prove these results easily. We leave the details to the readers.

{\bf Step 2.} There exists constant $C>0$ independent of $\varepsilon$ and $T$ such that
\begin{equation}\label{eq:intebound}
\int_0^T\int_{\mathbb{R}^N}(u_t^{\varepsilon})^2\,dxdt\leq C.
\end{equation}

We consider the following Lyapunov function (see e.g., \cite{GOS})
$$
I^{\varepsilon}(t)= \int_{\mathbb{R}^N}\sqrt{\varepsilon^2+|D u^{\varepsilon}|^2}\,dx.
$$
By calculation,
\begin{eqnarray*}
\frac{{\rm d}}{{\rm d}t}\int_{\mathbb{R}^N}\sqrt{\varepsilon^2+|D u^{\varepsilon}|^2}\,dx=\int_{\mathbb{R}^N}\frac{D u^{\varepsilon}\cdot D u_t^{\varepsilon}}{\sqrt{\varepsilon^2+|D u^{\varepsilon}|^2}}\,dx=-\int_{\mathbb{R}^N}u_t^{\varepsilon}{\rm div}\left(\frac{D u^{\varepsilon}}{\sqrt{\varepsilon^2+|D u^{\varepsilon}|^2}}\right)\,dx.
\end{eqnarray*}
If $u^{\varepsilon}(x_0,t_0)=(\psi^{\varepsilon})^+(x_0)$ at some $(x_0,t_0)\in\mathbb{R}^N\times(0,T)$, it also means
$$
\max\limits_{(x,t)\in\mathbb{R}^N\times(0,T)}(u^{\varepsilon}(x,t)-(\psi^{\varepsilon})^+(x))=u^{\varepsilon}(x_0,t_0)-(\psi^{\varepsilon})^+(x_0)=0.
$$
then there holds $u_t^{\varepsilon}(x_0,t_0)=0$. Same claim holds if $u^{\varepsilon}(x_0,t_0)=(\psi^{\varepsilon})^-(x_0)$. Consequently,
$$
F^{\pm}(t)=\{x\in\mathbb{R}^N\mid u^{\varepsilon}(x,t)=(\psi^{\varepsilon})^{\pm}(x),\, u_t^{\varepsilon}(x,t)\neq 0\}=\emptyset,
$$
for $t\in(0,T)$. Let $$
Q^{\pm}(t)=\{x\in\mathbb{R}^N\mid u^{\varepsilon}(x,t)\neq(\psi^{\varepsilon})^{\pm}(x) \},\quad t\in(0,T).
$$
Note the fact that for $x\in Q^+(t)\cap Q^-(t)$, we have
$$
u^{\varepsilon}_t=\sqrt{\varepsilon^2+|D u^{\varepsilon}|^2}\,\text{div}\left(\frac{D u^{\varepsilon}}{\sqrt{\varepsilon^2+|D u^{\varepsilon}|^2}}\right)+A\sqrt{\varepsilon^2+|D u^{\varepsilon}|^2}
$$
at $(x,t)$. Therefore,
\begin{eqnarray*}
\frac{{\rm d}}{{\rm d}t}\int_{\mathbb{R}^N}\sqrt{\varepsilon^2+|D u^{\varepsilon}|^2}\,dx&=&-\int_{\mathbb{R}^N\setminus (F^+\cup F^-)}u_t^{\varepsilon}{\rm div}\left(\frac{D u^{\varepsilon}}{\sqrt{\varepsilon^2+|D u^{\varepsilon}|^2}}\right)\,dx\\
&=&-\int_{Q^+\cap Q^-}u_t^{\varepsilon}{\rm div}\left(\frac{D u^{\varepsilon}}{\sqrt{\varepsilon^2+|D u^{\varepsilon}|^2}}\right)\,dx\\
&=&-\int_{Q^+\cap Q^-}\left(\frac{(u_t^{\varepsilon})^2}{\sqrt{\varepsilon^2+|D u^{\varepsilon}|^2}}-Au_t^{\varepsilon}\right)\,dx\\
&=&-\int_{\mathbb{R}^N}\left(\frac{(u_t^{\varepsilon})^2}{\sqrt{\varepsilon^2+|D u^{\varepsilon}|^2}}-Au_t^{\varepsilon}\right)\,dx\\
&\leq&-\frac{1}{\sqrt{\varepsilon^2+K^2}}\int_{\mathbb{R}^N}(u_t^{\varepsilon})^2dx+A\int_{\mathbb{R}^N}u_t^{\varepsilon}\,dx.
\end{eqnarray*}
 Then, 
$$
\frac{\rm d}{{\rm d}t}\left(\int_{\mathbb{R}^N}\sqrt{\varepsilon^2+|D u^{\varepsilon}|^2}\,dx-A\int_{\mathbb{R}^N}u^{\varepsilon}\,dx\right)\leq-\frac{1}{\sqrt{\varepsilon^2+K^2}}\int_{\mathbb{R}^N}(u_t^{\varepsilon})^2\,dx.
$$
Integrating the inequality above, we have
\begin{eqnarray*}
\int_0^{T}\int_{\mathbb{R}^N}(u_t^{\varepsilon})^2\,dxdt&\leq& A\sqrt{\varepsilon^2+K^2}\int_{\mathbb{R}^N}(u^{\varepsilon}(x,T)-u^{\varepsilon}(x,0))\,dx\\
&+&\sqrt{\varepsilon^2+K^2}\int_{\mathbb{R}^N}\left(\sqrt{\varepsilon^2+|D u^{\varepsilon}|^2(x,0)}-\sqrt{\varepsilon^2+|D u^{\varepsilon}|^2(x,T)}\,\right)\,dx.
\end{eqnarray*}
The assumptions for $(\psi^{\varepsilon})^{\pm}$ show that we can find a constant $C_1$ independent of $\varepsilon \in (0,1)$ such that 
$$
\int_{\mathbb{R}^N}|(\psi^{\varepsilon})^{\pm}|(x)\,dx\leq C_1.
$$
Then, for $\varepsilon\in(0,1)$, 
$$
\int_0^{T}\int_{\mathbb{R}^N}(u_t^{\varepsilon})^2\, dxdt\leq 2AC_1\sqrt{1+K^2}+\sqrt{1+K^2}\int_{\mathbb{R}^N}\sqrt{1+|D u_0^{\varepsilon}|^2(x)}\,dx:=C.
$$
Thus, 
$$
\int_0^{T}\int_{\mathbb{R}^N}(u_t^{\varepsilon})^2\,dxdt\leq C.
$$

{\bf Step 3.} $u^{\varepsilon}_t\rightharpoonup u_t$ weakly in $L^2(\mathbb{R}^N\times[0,\infty))$, as $\varepsilon\rightarrow 0$.

\noindent Step 2 shows that $u^{\varepsilon}_t$ is bounded in $L^2(\mathbb{R}^N\times[0,\infty))$. Since $L^2(\mathbb{R}^N\times[0,\infty))$ is a Hilbert space, there exists $g\in L^2(\mathbb{R}^N\times[0,\infty))$ such that, by passing to a subsequence if needed, 
$$
u^{\varepsilon}_t\rightharpoonup g,
$$
weakly in $L^2(\mathbb{R}^N\times[0,\infty))$, as $\varepsilon\rightarrow 0$. For every $\phi\in C_0^{\infty}(\mathbb{R}^N\times(0,T))$, 
$$
\int_0^{T}\int_{\mathbb{R}^N}u_t^{\varepsilon}\phi \,dxdt\rightarrow \int_0^{T}\int_{\mathbb{R}^N}g\phi \,dxdt,
$$
as $\varepsilon\rightarrow0$, for every $T>0$. On the other hand, Step 1 and Lemma \ref{lem:stable} show that $u^{\varepsilon}\rightarrow u$ uniformly on $\mathbb{R}^N\times[0,T]$  for every $T>0$. Therefore,
$$
\int_0^{T}\int_{\mathbb{R}^N}u_t^{\varepsilon}\phi\, dxdt=-\int_0^{T}\int_{\mathbb{R}^N}u^{\varepsilon}\phi_t \,dxdt\rightarrow-\int_0^{T}\int_{\mathbb{R}^N}u\phi_t \,dxdt.
$$
Then
$$
\int_0^{T}\int_{\mathbb{R}^N}g\phi \,dxdt=-\int_0^{T}\int_{\mathbb{R}^N}u\phi_t \,dxdt
$$
which shows that $g=u_t$ in $\mathbb{R}^N\times[0,\infty)$. Thus, $u^{\varepsilon}_t\rightharpoonup u_t$ weakly in $L^2(\mathbb{R}^N\times[0,\infty))$, as $\varepsilon\rightarrow0$ (whole sequence).

{\bf Step 4.} We complete the proof in this step.

\noindent By weakly lower semi-continuity, 
\begin{equation}\label{eq:lowersemi}
\int_0^{\infty}\int_{\mathbb{R}^N}(u_t)^2\,dxdt\leq \liminf\limits_{\varepsilon\rightarrow 0}\int_0^{\infty}\int_{\mathbb{R}^N}(u_t^{\varepsilon})^2\,dxdt\leq C.
\end{equation}
For every $\{t_k\}\rightarrow\infty$, by the Arzel\`a-Ascoli theorem, there exist a subsequence $\{t_{k_j}\}$ and a Lipschitz continuous function $v$ such that
$$
u_{k_j}(x,t)=u(x,t+t_{k_j})\rightarrow v(x,t),
$$
locally uniformly on $\mathbb{R}^N\times[0,\infty)$. As $u(\cdot,t)$ is compactly supported on $\overline{\Omega}$,   
\begin{equation}\label{eq:converg1}
u_{k_j}(x,t)=u(x,t+t_{k_j})\rightarrow v(x,t),
\end{equation}
uniformly on $\mathbb{R}^N\times[0,T]$, for every $T>0$. By stability results of viscosity solutions, $v$ satisfies
\begin{equation}
v_t=|D v|\text{div}\left(\frac{D v}{|D v|}\right)+A|D v|,\quad (x,t)\in \mathbb{R}^N\times(0,\infty),\tag{\ref{eq:levelset}$a$}
\end{equation}
\begin{equation}
\psi^-(x)\leq v(x,t)\leq \psi^+(x),\quad (x,t)\in\mathbb{R}^N\times[0,\infty).\tag{\ref{eq:obs}$b$}
\end{equation}
Thanks to the fact that 
$$
\int_{0}^{\infty}\int_{\mathbb{R}^N}(u_t)^2\,dxdt\leq C<\infty, 
$$
we have
$$
\int_0^1\int_{\mathbb{R}^N}(u_{k_j})_t^2\,dxdt=\int_{t_{k_j}}^{1+t_{k_j}}\int_{\mathbb{R}^N}(u_t)^2\,dxdt\rightarrow0,
$$
as $j\rightarrow\infty$. This shows that 
$$
(u_{k_j})_t\rightharpoonup0,
$$
weakly in $L^2(\mathbb{R}^N\times[0,1])$. On the other hand, (\ref{eq:converg1}) implies that, by passing to a further subsequence if necessary 
$$
(u_{k_j})_t\rightharpoonup v_t,
$$
weakly in $L^2(\mathbb{R}^N\times[0,1])$. Consequently, $v_t=0$ on $\mathbb{R}^N\times[0,1]$. Thus, $v$ is a solution of 
 \begin{equation}
0=|D v|\text{div}\left(\frac{D v}{|D v|}\right)+A|D v|,\quad x\in \mathbb{R}^N,\tag{\ref{eq:levelset}$e$}
\end{equation}
\begin{equation}
\psi^-(x)\leq v(x)\leq \psi^+(x),\quad x\in\mathbb{R}^N.\tag{\ref{eq:obs}}
\end{equation}
As mentioned in Remark \ref{rem:stationary}, the solution of (\ref{eq:levelset}$e$), (\ref{eq:obs}) is not necessary unique. Therefore, $v$ may depend on the choice of subsequence of $\{t_k\}_{k}$.

At last, we prove that $v$ is independent of the choice of subsequence of $\{t_k\}_{k}$. Since $u_{k_j}$ converges uniformly to $v$ on $\mathbb{R}^N\times[0,1]$, for every $\varepsilon>0$ there exists $j$ large enough such that
$$
|u_{k_j}(x,t)-v(x)|<\varepsilon,\quad\ \text{for all} \ (x,t)\in\mathbb{R}^N\times[0,1].
$$
Setting $t=0$, $v(x)-\varepsilon<u_{k_j}(x,0)<v(x)+\varepsilon$ in $\mathbb{R}^N$. By the comparison principle, 
$$
v(x)-\varepsilon\leq (v(x)-\varepsilon)\vee\psi^{-} \leq u(x,t)\leq (v(x)+\varepsilon)\wedge\psi^{+}\leq v(x)+\varepsilon,
$$
for $(x,t)\in\mathbb{R}^N\times[t_{k_j},\infty)$. This implies that $u(\cdot,t)$ converges uniformly to $v$ in $\mathbb{R}^N$ without taking a subsequence.

\end{proof}
\begin{rem}
The proof of large time behavior is quite standard. Nevertheless, as we do not have uniqueness of solutions to (\ref{eq:levelset}$e$), (\ref{eq:obs}), it is not easy to analyze what is the limiting profile given the initial data $u_0$, and obstacles $\psi^\pm$. In the following, we are able to characterize this in the radially symmetric setting.
\end{rem}

\section{The radially symmetric setting}\large
In this section, we prove Theorem \ref{thm:asymcircle}. First, we find radially symmetric solutions of (\ref{eq:levelset}$e$), and (\ref{eq:obs}). To establish this, we need the following lemma (\cite[Lemma 8.1]{GMT}).

\begin{lem}\label{lem:sub}
Let $\psi:[0,\infty)\rightarrow \mathbb{R}$ be a continuous function, which is $C^2$ in $(0,R)\cup(R,R_1)$, and $\psi=0$ in $[R_1,\infty)$, for some given $R,\ R_1>0$ with $R_1>R$. Assume further that
$$
\psi^{\prime}(R-)=a\quad \text{and}\quad \psi^{\prime}(R+)=b.
$$
The following holds.

(i) If $a<b$, then for any $\phi\in C^2(\mathbb{R}^N)$ such that $\psi(|x|)-\phi(x)$ has a strict minimum at $x_0\in \partial B(0,R)$, then for some $s\in[a,b]$,
$$
D\phi(x_0)=s\frac{x_0}{R}\quad \text{and}\quad {\rm tr}\left[\left((I-\frac{Du\otimes Du}{|Du|^2})\right)D^2 u \right]\leq \frac{(N-1)s}{R}.
$$

(ii) If $a>b$, then for any $\phi\in C^2(\mathbb{R}^N)$ such that $\psi(|x|)-\phi(x)$ has a strict maximum at $x_0\in \partial B(0,R)$, then for some $s\in[b,a]$,
$$
D\phi(x_0)=s\frac{x_0}{R}\quad \text{and}\quad {\rm tr}\left[\left((I-\frac{Du\otimes Du}{|Du|^2})\right)D^2 u \right]\geq \frac{(N-1)s}{R}.
$$
\end{lem}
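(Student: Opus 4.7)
My plan is to reduce Lemma~\ref{lem:sub} to a Taylor expansion of $\psi(|x|) - \phi(x)$ along carefully chosen curves through $x_0$. The crucial point is that to obtain the sharp tangential Hessian bound one must approach $x_0$ from inside the ball $\{|y|<R\}$, so as to pick up the left-derivative $a$ even in a second-order computation. I describe Case~(i) (strict minimum, $a<b$); Case~(ii) is identical with all inequalities reversed.

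By rotation invariance I may assume $x_0 = Re_1$, set $\nu := x_0/R$, and fix an orthonormal basis $\tau_1,\dots,\tau_{N-1}$ of $\nu^\perp$; subtracting a constant from $\phi$, I also assume $\phi(x_0)=\psi(R)$. Write $p := D\phi(x_0)$ and $M := D^2\phi(x_0)$. The gradient part of the claim is routine: expanding $\psi(|x|) - \phi(x)\geq 0$ along $x = x_0 + t\nu$ at first order gives $(b - p\cdot\nu)\,t + O(t^2)\geq 0$ for $t>0$ and $(a - p\cdot\nu)\,t + O(t^2)\geq 0$ for $t<0$, hence $a\leq p\cdot\nu\leq b$. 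Along $x = x_0 + \rho\tau_i$ one has $|x| - R = \rho^2/(2R) + O(\rho^4)$, so the $O(\rho)$ term of the difference equals $-\rho\,p\cdot\tau_i$ and therefore $p\cdot\tau_i = 0$ for every $i$. Thus $p = s\,x_0/R$ with some $s\in[a,b]$.

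The main obstacle is the tangential Hessian bound $\tau_i^{\mathsf{T}} M\tau_i \leq s/R$: a straight-line expansion along $x_0 + \rho\tau_i$ stays in $\{|x|\geq R\}$, samples only $\psi'(R+) = b$, and yields the too-weak $\leq b/R$. To get the sharp bound I probe along the curved path
$$
x_\varepsilon := x_0 - \varepsilon^{2}\nu + \varepsilon\sqrt{2R(1-\delta)}\,\tau_i,
$$
with $\delta>0$ small and fixed. A short computation gives $|x_\varepsilon|^{2} = R^{2} - 2R\delta\varepsilon^{2} + \varepsilon^{4}$, so $|x_\varepsilon| - R = -\delta\varepsilon^{2} + O(\varepsilon^{4}) < 0$, and $\psi$ is expanded using its left-derivative $a$. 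Combining this with the $C^{2}$ Taylor expansion of $\phi$ at $x_0$ yields
$$
\psi(|x_\varepsilon|) - \phi(x_\varepsilon) = \bigl(s - R(1-\delta)\,\tau_i^{\mathsf{T}} M\tau_i - a\delta\bigr)\varepsilon^{2} + o(\varepsilon^{2}) \geq 0;
$$
dividing by $\varepsilon^{2}$, sending $\varepsilon\to 0$, and then $\delta\to 0^+$, we extract $\tau_i^{\mathsf{T}} M\tau_i \leq s/R$.

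Summing over $i$ and noting that $I - p\otimes p/|p|^{2} = I - \nu\otimes\nu$ is the projection onto $\nu^\perp$ (the degenerate case $s=0$ being absorbed into the envelope $F^{*}$ from Section~2), I conclude
$$
\operatorname{tr}\!\Bigl(\bigl(I - \tfrac{p\otimes p}{|p|^{2}}\bigr)M\Bigr) \;=\; \sum_{i=1}^{N-1} \tau_i^{\mathsf{T}} M\tau_i \;\leq\; \frac{(N-1)s}{R}.
$$
For Case~(ii) one runs exactly the same probing curve $x_\varepsilon$: since now $\phi \geq \psi(|\cdot|)$ near the strict maximum, every inequality flips, giving $s\in[b,a]$, $p = s\,x_0/R$, and $\tau_i^{\mathsf{T}} M\tau_i \geq s/R$, hence the claimed lower bound on the trace.
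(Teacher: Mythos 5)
Your proof is correct. Note that the paper never proves Lemma \ref{lem:sub}: it is imported verbatim from \cite[Lemma 8.1]{GMT}, so there is no in-text argument to compare against, and your write-up supplies a genuinely self-contained proof. The key device — the parabolic probe $x_\varepsilon = x_0-\varepsilon^2\nu+\varepsilon\sqrt{2R(1-\delta)}\,\tau_i$, which dips inside $B(0,R)$ so that the second-order tangential information is governed by the interior slope $a$ rather than the exterior slope $b$ — is exactly what is needed; as you observe, the straight tangential line only yields the useless bound $b/R$. The algebra checks out: $|x_\varepsilon|^2=R^2-2R\delta\varepsilon^2+\varepsilon^4$, the relation $p\cdot\tau_i=0$ kills the first-order tangential contribution of $\phi$, and one obtains $\langle D^2\phi(x_0)\tau_i,\tau_i\rangle\leq (s-a\delta)/(R(1-\delta))\rightarrow s/R$ as $\delta\rightarrow 0^+$. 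Two small points to make explicit. First, the expansion of $\psi$ to the left of $R$ should only be taken to first order, $\psi(r)=\psi(R)+a(r-R)+o(r-R)$, which follows from the existence of $\psi'(R-)$ via the mean value theorem; the hypotheses give no bound on $\psi''$ near $R$, but since $|x_\varepsilon|-R=O(\varepsilon^2)$ this first-order expansion already produces the $o(\varepsilon^2)$ error you need. Second, when $s=0$ the matrix $I-p\otimes p/|p|^2$ in the conclusion is undefined; this is a defect of the lemma as quoted rather than of your argument, and your reading of the degenerate case through the envelopes $F^*$ and $F_*$ is consistent with how the lemma is actually invoked in Section 4.
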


We always assume that the hypotheses of Theorem \ref{thm:asymcircle} are in force in this section. 
Let us recall them here for clarity
\\(1) $\Omega=B_R(O)$ for given $R>0$;
\\
(2)  
$$
\psi^{+}(x)=\left\{
\begin{array}{lcl}
 \lambda {\rm dist}(x,\partial \Omega)=\lambda(R-|x|),\quad x\in\Omega,\\
0,\quad x\in \mathbb{R}^N\setminus \Omega,
\end{array}
\right.
$$
where $\lambda >0$ is given;
\\
(3) $\psi^-$ is radial, and $\psi^-<0$ in $\Omega$. The initial data $u_0$ is radial, and $\psi^- \leq u_0 \leq \psi^+$.

\begin{prop}\label{prop:stationpositive}
Assume $R\geq (N-1)/A$. The solution $v$ of {\rm (\ref{eq:levelset}$e$), (\ref{eq:obs})} satisfies
$$
0\leq v\leq \psi^{+}\wedge\lambda\left(R-\frac{N-1}{A}\right).
$$
\end{prop}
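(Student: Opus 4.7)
The plan is to bracket $v$ between the monotone parabolic evolutions starting from the two obstacles and to classify their long-time limits among radial stationary viscosity solutions of (\ref{eq:levelset}$e$), (\ref{eq:obs}) via Lemma~\ref{lem:sub}. I would first verify that $\psi_{C_*}:=\psi^+\wedge C_*$, with $C_*=\lambda(R-(N-1)/A)$, is a stationary viscosity supersolution of (\ref{eq:levelset}$e$), (\ref{eq:obs}): on the plateau $\{|x|<(N-1)/A\}$ the supersolution inequality reduces to $F_*(0,D^2\phi)=\text{tr}(D^2\phi)-\lambda_{\max}(D^2\phi)\le 0$ for $D^2\phi\le 0$; on the annulus $\{|x|\ge(N-1)/A\}$ the test deactivates because $\psi_{C_*}=\psi^+$; at the concave kink $|x|=(N-1)/A$ no $C^2$ function can touch $\psi_{C_*}$ from below in the two radial directions simultaneously, so the test is vacuous.

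For the upper bound, let $\bar u$ solve (\ref{eq:levelset})--(\ref{eq:obs}) with $\bar u(\cdot,0)=\psi^+$. Since $\bar u(\cdot,s)\le\psi^+=\bar u(\cdot,0)$ for every $s\ge 0$, Proposition~\ref{prop:comp} applied to $\bar u(\cdot,t+s)$ versus $\bar u(\cdot,t)$ shows $\bar u$ is nonincreasing in $t$. By radial symmetry (preserved by uniqueness) and monotonicity in $|x|$ (preserved by comparison with reflected data, using that $\psi^+$ is nonincreasing in $|x|$), $\bar u(\cdot,t)$ is radial and nonincreasing in $|x|$. Theorem~\ref{thm:asym} produces a uniform radial limit $\bar v$, a stationary viscosity solution with $\bar v(R)=0$ that is nonincreasing in $|x|$, so in particular $\bar v\ge 0$. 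Since $u_0\le\psi^+$, Proposition~\ref{prop:comp} gives $u\le\bar u$ and hence $v\le\bar v$.

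To pin down $\bar v\le\psi_{C_*}$, I would classify radial nonincreasing stationary solutions $V$ with $V(R)=0$: on any smooth interval where neither obstacle is active the first-order radial equation $(N-1)V'/r+A|V'|=0$ has no non-constant solutions (for $V'>0$ the coefficient $(N-1)/r+A$ is strictly positive, while $V'<0$ only allows the isolated radius $r=(N-1)/A$), so $V$ is piecewise constant on such pieces; continuity together with $V\le\psi^+$, $V(R)=0$, and monotonicity then forces $\bar v$ to be of the form $\psi_c=\psi^+\wedge c$ for some $c\in[0,\lambda R]$. Applying Lemma~\ref{lem:sub}(ii) at the concave kink $|x|=R-c/\lambda$ of $\psi_c$ to verify the subsolution property of $\bar v$ forces $R-c/\lambda\ge(N-1)/A$, i.e., $c\le C_*$, so $v\le\bar v\le\psi_{C_*}$. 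The lower bound $v\ge 0$ follows by the mirror construction with the minimal solution $\underline u$ starting from $\psi^-$: $\underline u$ is nondecreasing in $t$ with a radial stationary limit $\underline v\le v$, and the analogous classification, using that the supersolution test forces $(\psi^-)'\le 0$ on any hypothetical piece where $V=\psi^-$ (which is incompatible with the continuous boundary condition $\psi^-(R)=0$ together with $\psi^-<0$ in $\Omega$), rules out negative components of $\underline v$.

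The main technical obstacle is the kink analysis via Lemma~\ref{lem:sub}: it is where the sharp threshold $c\le C_*$ in the upper bound is extracted and where the standing hypothesis $R\ge(N-1)/A$ genuinely enters (ensuring $C_*\ge 0$ and that the critical radius lies inside $\overline{\Omega}$). A secondary subtlety is the careful propagation of radial monotonicity in $|x|$ through the parabolic evolution for both extremal solutions, which replaces the failed stationary comparison principle recorded in Remark~\ref{rem:stationary}(4).
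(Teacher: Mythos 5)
Your approach is genuinely different from the paper's: the paper constructs two explicit radial, exponentially-decaying-in-time barriers (a subsolution $u_*=\varphi_*\vee\psi^-$ decaying to $0$, and a supersolution $u^{*\varepsilon}$ converging to $\psi^+\wedge\lambda(R-(N-1)/A+\varepsilon)$), and applies the parabolic comparison principle directly against the stationary $v$ (viewed as a time-constant solution), then sends $t\to\infty$ and $\varepsilon\to 0$. You instead propose to bound $v$ between the flows started from the obstacles and then classify their limits among radial stationary solutions. Conceptually this is a reasonable alternative, and your observation that $\psi_{C_*}$ is a stationary supersolution (including the vacuity of the test at the concave kink) is correct. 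However, your route has several gaps that the paper's more direct computation avoids.

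First, the radial monotonicity in $|x|$ of $\bar u(\cdot,t)$ is claimed to be ``preserved by comparison with reflected data,'' but hyperplane reflections that move points do not fix the radial obstacles $\psi^\pm$, so the obstacle problem is not invariant under the reflection and the standard moving-plane comparison does not apply. Since your classification of $\bar v$ leans on this monotonicity (to conclude $\bar v\ge 0$ and to rule out further plateaus), this is a substantive missing step; the paper extracts $v\ge 0$ in one line from the barrier $u_*\to 0$.

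Second, the kink analysis is used in the wrong direction. Lemma~\ref{lem:sub}(ii) gives, for \emph{any} admissible test function touching $\psi_c$ from above at the kink, the one-sided bound ${\rm tr}[\cdot]\ge (N-1)s/|x_0|$ for some $s\in[-\lambda,0]$; this yields an \emph{upper} bound on $-F(D\varphi,D^2\varphi)$ and therefore shows that $\psi_c$ \emph{is} a subsolution when $R-c/\lambda\ge (N-1)/A$. To conclude, as you need, that the subsolution test \emph{fails} when $R-c/\lambda<(N-1)/A$, you must exhibit a specific test function that violates the inequality (e.g.\ a locally radial $\varphi(x)=s|x|+\text{const}$ with $s\in(-\lambda,0)$, for which the curvature term equals $(N-1)s/|x_0|$ exactly). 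The lemma alone does not ``force'' the threshold.

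Third, you invoke Theorem~\ref{thm:asym} for the flows started from $\psi^{\pm}$, but those are only Lipschitz with corners at $0$ and $|x|=R$, whereas Theorems~\ref{thm:grad}--\ref{thm:asym} assume $u_0\in C^{1,1}$; a regularization step (or a re-derivation of the time-Lipschitz/convergence machinery for merely Lipschitz data) is needed. Fourth, the lower bound argument (``rules out negative components of $\underline v$'') is too vague to check: the obstacle $\psi^-$ is essentially arbitrary except for its sign, and the supersolution test on a contact set with $\psi^-$ gives no usable information in general. Finally, note that the proposition concerns an arbitrary stationary solution $v$ of (\ref{eq:levelset}$e$), (\ref{eq:obs}); as written your argument bounds only the large-time limit of the flow from the fixed $u_0$. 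The fix—compare the time-constant $v$ (which satisfies $\psi^-\le v\le\psi^+$) directly with the two parabolic flows—is easy but should be stated.

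In short: the idea of classifying the limits from the obstacle data is interesting and close in spirit to Proposition~\ref{prop:symmetricstation}, but the paper's direct construction of $u_*$ and $u^{*\varepsilon}$ is both shorter and bypasses all four of the issues above.
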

\begin{proof}
Let 
$$
u_*=\varphi_*\vee\psi^-,
$$
where
$$
\varphi_*(x,t)=\left\{
\begin{array}{lcl}
-Le^{-\gamma t}(R-|x|),\quad |x|\leq R,\, t\geq0,\\
0,\quad |x|>R,\, t\geq 0,
\end{array}
\right.
$$
where $\gamma$ is chosen such that
$$
0<\gamma<\frac{1}{R}\left(\frac{N-1}{R}+A\right).
$$
At $(x_0,t_0)$, for $0<|x_0|<R$, and $t_0>0$,
\begin{multline*}
\left((\varphi_{*})_t-|D \varphi_*|{\rm div}\left(\frac{D \varphi_*}{|D \varphi_*|}\right)-A|D \varphi_*|\right)(x_0,t_0)=L e^{-\gamma t_0} \left(\gamma(R-|x_0|)-\frac{N-1}{|x_0|}-A\right)\\
\dis{\leq L e^{-\gamma t_0} \left(\gamma R-\frac{N-1}{R}-A\right)<0.}
\end{multline*}
Obviously, $u_*(\cdot,0)= \psi^-\leq v$ and $\psi^-\leq u_*\leq \psi^+$. Thus, $u_*$ is subsolution of  (\ref{eq:levelset}), (\ref{eq:initial}), and (\ref{eq:obs}). By the comparison principle, 
$$
u_*(x,t)\leq v(x),\quad\ \text{for\ all}\ (x,t)\in\mathbb{R}^N\times[0,\infty).
$$
Since $u_*(\cdot,t)\rightarrow0$, as $t\rightarrow\infty$, we conclude that $v\geq0$.

For small $\varepsilon>0$, let
$$
u^{*\varepsilon}(x,t)=\left\{
\begin{array}{lcl}
\lambda e^{-\mu t}\left(\frac{N-1}{A}-\varepsilon-|x|\right)+\lambda\left(R-\frac{N-1}{A}+\varepsilon\right),\quad |x|\leq \frac{N-1}{A}-\varepsilon,\, t\geq0,\\
\psi^+,\quad |x|>\frac{N-1}{A}-\varepsilon,\, t\geq 0.
\end{array}
\right.
$$
Here, $0<\mu<\frac{A}{N-1}\left(\frac{N-1}{(N-1)/A-\varepsilon}-A\right).$
\begin{figure}[htbp]
	\begin{center}
            \includegraphics[height=8.0cm]{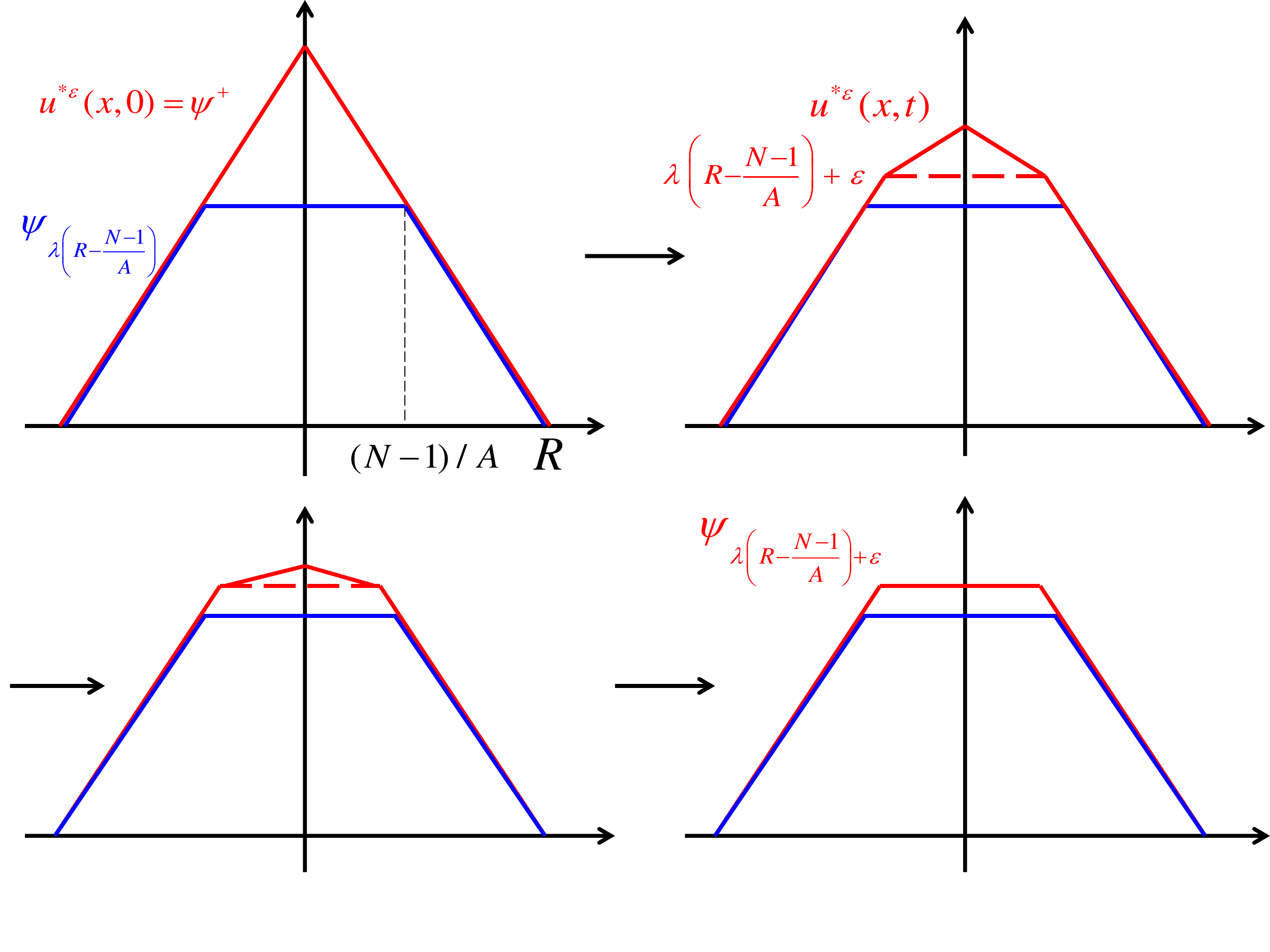}
		\vskip 0pt
		\caption{Supersolution $u^{*\varepsilon}$}
        \label{fig:super1}
	\end{center}
\end{figure}

\noindent At $(x_0,t_0)$, where $0<|x_0|<\frac{N-1}{A}-\varepsilon$, and $t_0>0$, we compute
\begin{align*}
&\left(u^{*\varepsilon}_{t}-|D u^{*\varepsilon}|{\rm div}\left(\frac{D u^{*\varepsilon}}{|D u^{*\varepsilon}|}\right)-A|Du^{*\varepsilon}|\right)(x_0,t_0)\\
=\ &\lambda e^{-\mu t_0} \left(-\mu(\frac{N-1}{A}-\varepsilon-|x_0|)+\frac{N-1}{|x_0|}-A\right)\\
\geq\ & \lambda e^{-\mu t_0}\left(-\mu\frac{N-1}{A}+\frac{N-1}{(N-1)/A-\varepsilon}-A\right)>0.
\end{align*}
Thus, $u^{*\varepsilon}$ is supersolution of  (\ref{eq:levelset}), (\ref{eq:initial}), and (\ref{eq:obs}). 

Obviously, $u^{*\varepsilon}(\cdot,0)=\psi^+\geq v$, and $\psi^-\leq u^{*\varepsilon}\leq \psi^+$. By the comparison principle,
$$
v(x)\leq u^{*\varepsilon},\quad\text{for\ all}\ (x,t)\in\mathbb{R}^N\times[0,\infty).
$$
Since 
$$
u^{*\varepsilon}(\cdot,t)\rightarrow\psi^+\wedge\lambda(R-\frac{N-1}{A}+\varepsilon),
$$
we deduce, as $t\rightarrow\infty$, 
$$
v\leq \psi^+\wedge\lambda(R-\frac{N-1}{A}+\varepsilon).
$$
Letting $\varepsilon\rightarrow0$ in the above to imply
$$
v\leq \psi^+\wedge\lambda(R-\frac{N-1}{A}).
$$
The proof is now complete.
\end{proof}

\begin{prop}\label{prop:symmetricstation}
Assume $R\geq(N-1)/A$. Then
$$
\psi_C:=\psi^+\wedge C,\quad \text{for}\ 0\leq C\leq \lambda\left(R-\frac{N-1}{A}\right),
$$ 
are all radially symmetric and Lipschitz continuous solutions to {\rm (\ref{eq:levelset}$e$)}, and {\rm (\ref{eq:obs})} satisfying that $$
0\leq v\leq \psi^{+}\wedge\lambda\left(R-\frac{N-1}{A}\right).
$$
\end{prop}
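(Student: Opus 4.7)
The plan is to verify, for each $C \in [0, \lambda(R - (N-1)/A)]$, that $\psi_C := \psi^+ \wedge C$ is a radially symmetric, Lipschitz viscosity solution of the stationary obstacle problem satisfying the stated bound. I would first observe that radial symmetry is inherited from $\psi^+$, Lipschitz continuity with constant $\lambda$ follows because the minimum of two Lipschitz functions preserves the Lipschitz constant, and $0 \leq \psi_C \leq \psi^+ \wedge \lambda(R - (N-1)/A)$ is immediate from $0 \leq C \leq \lambda(R - (N-1)/A)$. Setting $r_C := R - C/\lambda$, the constraint on $C$ forces $r_C \geq (N-1)/A$, and $\psi_C$ admits the clean piecewise description: $\psi_C \equiv C$ on $\{|x| \leq r_C\}$, $\psi_C(x) = \lambda(R - |x|)$ on $\{r_C \leq |x| \leq R\}$, and $\psi_C \equiv 0$ on $\{|x| \geq R\}$, with non-smoothness only on the two spheres $|x| = r_C$ and $|x| = R$.

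Next I would verify the viscosity subsolution condition region by region at a test point $\hat x$ satisfying $\psi_C(\hat x) > \psi^-(\hat x)$. On $\{|x| < r_C\}$, $\psi_C = C$ is constant, so any upper test function $\varphi$ has $D\varphi(\hat x) = 0$ and $D^2\varphi(\hat x) \geq 0$, and direct inspection of the definition of $F^*$ yields $F^*(0, D^2\varphi) \geq 0$. On $\{r_C < |x| < R\}$, $\psi_C$ is smooth with $F(D\psi_C, D^2\psi_C) = A\lambda - \lambda(N-1)/|\hat x| \geq 0$ because $|\hat x| \geq r_C \geq (N-1)/A$, and degenerate ellipticity of $F$ transfers the bound to the test function. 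On $\{|x| \geq R\}$, $\psi_C = \psi^- = \psi^+ = 0$, so the subsolution inequality is vacuous. The remaining case is the inner kink $|\hat x| = r_C$: Lemma 4.1(ii) applied with $a = 0$ and $b = -\lambda$ supplies some $s \in [-\lambda, 0]$ with $D\varphi(\hat x) = s\hat x/r_C$ and ${\rm tr}\bigl((I - p \otimes p/|p|^2) D^2\varphi\bigr) \geq (N-1)s/r_C$, so that $F(D\varphi, D^2\varphi) \geq -s\bigl(A - (N-1)/r_C\bigr) \geq 0$.

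For the supersolution condition, I would note that verification is needed only where $\psi_C < \psi^+$; since $\psi_C = \psi^+$ on $\{r_C \leq |x| \leq R\}$ and both vanish outside $B_R$, this restricts attention to $\{|x| < r_C\}$. There $\psi_C$ is constant, so a lower test function has $D\varphi(\hat x) = 0$ and $D^2\varphi(\hat x) \leq 0$, and $F_*(0, D^2\varphi) \leq 0$ follows directly from the envelope.

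The main obstacle is the inner kink at $|x| = r_C$: this is the only place requiring the nontrivial Lemma 4.1, and it is exactly where the admissibility constraint $C \leq \lambda(R - (N-1)/A)$ earns its role, since it is equivalent to $A - (N-1)/r_C \geq 0$, which is precisely what preserves the correct sign in the subsolution inequality at the kink. The singular instances $D\varphi(\hat x) = 0$ (which occur both on the flat region and in the limiting $s = 0$ case at the kink) are handled through the semicontinuous envelopes $F^*$ and $F_*$ as prescribed by the definition of viscosity solution.
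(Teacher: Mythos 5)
Your verification that each $\psi_C$ is a radially symmetric Lipschitz viscosity solution satisfying $0\leq \psi_C\leq \psi^+\wedge\lambda(R-(N-1)/A)$ is correct, and it matches the second half of the paper's argument almost exactly: the region-by-region check using degenerate ellipticity on $\{r_C<|x|<R\}$, the envelope argument where the gradient vanishes, and Lemma~\ref{lem:sub}(ii) at the inner kink $|x|=r_C$ with $s\in[-\lambda,0]$ and the sign condition $r_C\geq (N-1)/A$.

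However, you are only proving one direction of the statement. The proposition is a \emph{classification}: the functions $\psi_C$ for $0\leq C\leq\lambda(R-(N-1)/A)$ are claimed to be \emph{all} radially symmetric Lipschitz solutions of (\ref{eq:levelset}$e$), (\ref{eq:obs}) in that range, not merely a family of examples. Your proposal never shows that an arbitrary radially symmetric Lipschitz solution $v$ with $0\leq v\leq\psi^+\wedge\lambda(R-(N-1)/A)$ must equal some $\psi_C$. The paper's proof devotes its first half precisely to this: writing $v(x)=v(|x|)=v(r)$, passing to the radial first-order ODE $-\frac{(N-1)v'}{r}-A|v'|=0$ on the non-contact set $(0,R)\setminus E$ where $E=\{r\mid v(r)=\psi^\pm(r)\}$, decomposing this open set into intervals $(a_i,b_i)$, noting that a.e.\ differentiability plus the ODE (away from the single point $r=(N-1)/A$) forces $v'=0$ a.e., hence $v\equiv c_i$ on each $(a_i,b_i)$, and finally exploiting the strict decrease of $\psi^+$ on $(0,R)$ to rule out any interval with $i\geq 2$. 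This pins down $v=\psi^+\wedge c_1$. The omission is not cosmetic: the proof of Theorem~\ref{thm:asymcircle}(2) establishes only a sandwich $\underline{\psi}\leq v\leq\psi_B$ and then invokes Proposition~\ref{prop:symmetricstation} to conclude $v=\psi_B$; without the classification direction that last step is unavailable. You need to supply it.
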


\begin{figure}[htbp]
	\begin{center}
            \includegraphics[height=8.0cm]{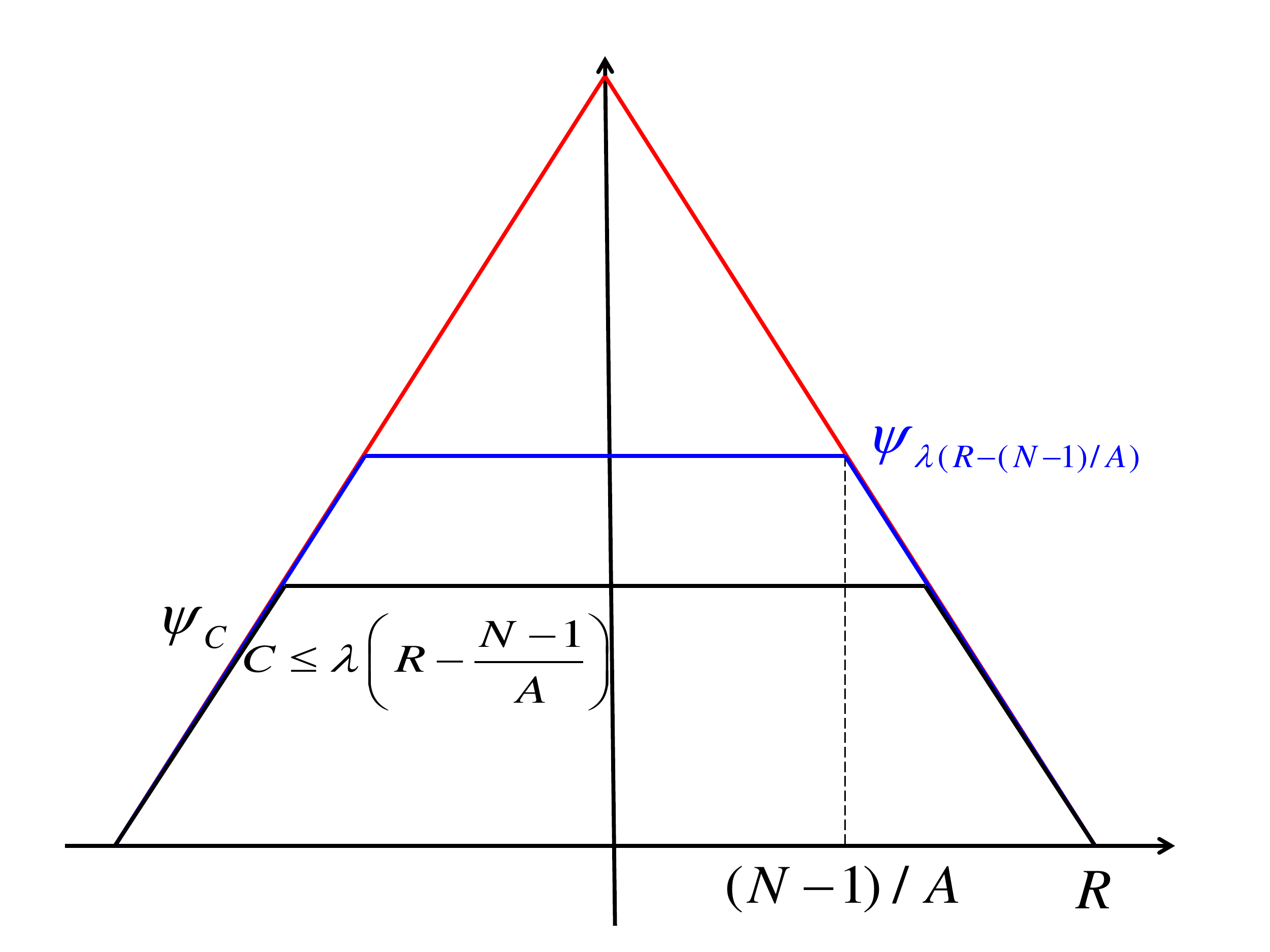}
		\vskip 0pt
		\caption{Stationary solution of (\ref{eq:levelset}$e$), (\ref{eq:obs})}
        \label{fig:stationary}
	\end{center}
\end{figure}
\begin{proof}
Let $v$ be a radially symmetric solution to (\ref{eq:levelset}$e$), and (\ref{eq:obs}). By abuse of notion, we write $v(x)=v(|x|)$ for $x\in\mathbb{R}^N$. Then, $v(r)$ satisfies following ODE
\begin{equation}
\dis{-\frac{(N-1)v^{\prime}}{r}}-A|v^{\prime}|=0,\quad r>0,\tag{\ref{eq:levelset}$r$}
\end{equation}
\begin{equation}
\psi^-(r)\leq v(r)\leq \psi^+(r),\quad r\geq0, \tag{\ref{eq:obs}$r$}
\end{equation}
in the viscosity sense.

Denote by
$$
E=\{r\in(0,R)\mid v(r)=\psi^+(r)\ \text{or}\ v(r)=\psi^-(r)\}.
$$
Obviously, $E$ is a closed set. Then we can find at most a countable number of intervals $(a_i,b_i)$, $i\in \mathbb{N}$, satisfying
$$
b_i\leq a_{i+1},
$$
and
$$
(0,R)\setminus E=\bigcup_{i=1}^{\infty}(a_i,b_i).
$$
Under our assumption, $v(0)<\psi^+(0)$. Thus $a_1=0$.

In each interval $(a_i,b_i)$, $v$ satisfies (\ref{eq:levelset}$r$) in the viscosity sense. If $v$ is differentiable at $r_0\in(a_i,b_i)$, and $r_0\neq (N-1)/A$, there holds
$$
-\frac{(N-1)v^{\prime}(r_0)}{r_0}-A|v^{\prime}(r_0)|=0,
$$
which implies $v^{\prime}(r_0)=0$. Since $v$ is Lipschitz continuous, $v$ is differentiable in $(a_i,b_i)$ almost everywhere. Thus 
$$
v^{\prime}=0,\quad \text{a.e.}\quad \text{in}\ (a_i,b_i),
$$
which gives that $v\equiv c_i$ in $(a_i,b_i)$ as $v$ is Lipschitz continuous. We have 
$$
(a_i,b_i)=\emptyset,
$$
provided that $\psi^+(a_i)>\psi^+(b_i)$, $i\geq 2$. Therefore, $v= c_1$ in $[a_1,b_1)=[0,b_1)$, and $v=\psi^+$ in $[b_1,\infty)$. In other words, 
$$
v=\psi^+\wedge c_1.
$$
Under our assumption, $0\leq c_1\leq\lambda(R-(N-1)/A)$. 

At last we prove that for all $0\leq C\leq\lambda(R-(N-1)/A)$, $\psi^+\wedge C$ is the solution of problem (\ref{eq:levelset}$e$), (\ref{eq:obs}). It is easy to see $\psi^+\wedge C$ is a supersolution. We only show $\psi_C$ is a subsolution. 

This claim is clear for $|x|<R-\frac{C}{\lambda}$ or $|x|\geq R$. We only check carefully where $R-\frac{C}{\lambda}\leq |x_0|<R$.

 For $R-\frac{C}{\lambda}<|x_0|<R$, we have $\psi_C(x_0)=\psi^+(x_0)>\psi^-(x_0)$, and 
$$
\left(-|D \psi^+|{\rm div}\left(\frac{D \psi^+}{|D \psi^+|}\right)-A|D \psi^+|\right)(x_0)=\lambda\left(\frac{N-1}{|x_0|}-A\right)<0.
$$
Here we use the fact $|x_0|>R-\frac{C}{\lambda}\geq (N-1)/A$.

 For $|x_0|=R-\frac{C}{\lambda}$, assume that there is a test function $\varphi\in C^2(\mathbb{R}^N)$ such that $\varphi-\psi_C$ has a strict maximum at $x_0$. By Lemma \ref{lem:sub}, there exists $s\in[-\lambda,0]$ such that
$$
D \varphi(x_0)=\frac{sx_0}{|x_0|}\quad \text{and}\quad {\rm tr}\left[\left((I-\frac{Du\otimes Du}{|Du|^2})\right)D^2 u \right]\geq \frac{(N-1)s}{|x_0|}.
$$
Since $|x_0|=R-\frac{C}{\lambda}\geq (N-1)/A$, and $s\leq 0$, we have
$$
\left(-|D \varphi|{\rm div}\left(\frac{D \varphi}{|D \varphi|}\right)-A|D \varphi|\right)(x_0)\leq -\frac{(N-1)s}{|x_0|}-A|s|=-\frac{(N-1)s}{|x_0|}+As\leq 0.
$$
Thus, $\psi_C$ is subsolution. The proof is complete.
  
\end{proof}

\begin{proof}[Proof of Theorem \ref{thm:asymcircle}]
 If $R=(N-1)/A$, Proposition \ref{prop:symmetricstation} shows that $0$ is the unique solution of (\ref{eq:levelset}$e$), (\ref{eq:obs}). The result is obvious by Theorem \ref{thm:asym}.

 If $R<(N-1)/A$, let 
$$
u^*(x,t)=\left\{
\begin{array}{lcl}
\lambda e^{-\nu_1t}(R-|x|),\quad |x|<R,\\
0,\quad |x|\geq R.
\end{array}
\right.
$$
Here $\nu_1$ is chosen such that
$$
0<\nu_1<\frac{1}{R}\left(\frac{N-1}{R}-A\right).
$$
At $(x_0,t_0)$, where $0<|x_0|<R$, and $t_0>0$,
\begin{multline*}
\left(u^{*}_{t}-|\nabla u^{*}|{\rm div}\left(\frac{\nabla u^{*}}{|\nabla u^{*}|}\right)-A|\nabla u^{*}|\right)(x_0,t_0)=\lambda e^{-\nu_1 t_0} \left(-\nu_1(R-|x_0|)+\frac{N-1}{|x_0|}-A\right)\\
\dis{\geq\lambda e^{-\nu_1 t_0} \left(-\nu_1 R+\frac{N-1}{R}-A\right)>0.}
\end{multline*}
Then $u^*$ is a supersolution. Besides, $u_*$ constructed in Proposition \ref{prop:stationpositive} is a subsolution. By the comparison principle, 
$$
u_*\leq u\leq u^*.
$$
Therefore, $u(\cdot,t)\rightarrow 0$ uniformly in $\mathbb{R}^N$, as $t\rightarrow0$.

We consider the final case where $R>(N-1)/A$.
First we construct a supersolution
$$
\overline{u}^{\varepsilon}(x,t)=\left\{
\begin{array}{lcl}
(B+\varepsilon)\wedge \psi^+,\quad |x|>\frac{N-1}{A}-\frac{\varepsilon}{L},\\
Le^{-\nu_2t}(\frac{N-1}{A}-\frac{\varepsilon}{L}-|x|)+B+\varepsilon,\quad |x|\leq \frac{N-1}{A}-\frac{\varepsilon}{L}.
\end{array}
\right.
$$
Here $\nu_2$ is chosen so that
$$
0<\nu_2<\frac{A}{N-1}\left(\frac{N-1}{\frac{N-1}{A}-\frac{\varepsilon}{L}}-A\right).
$$
\begin{figure}[H]
	\begin{center}
            \includegraphics[height=8.0cm]{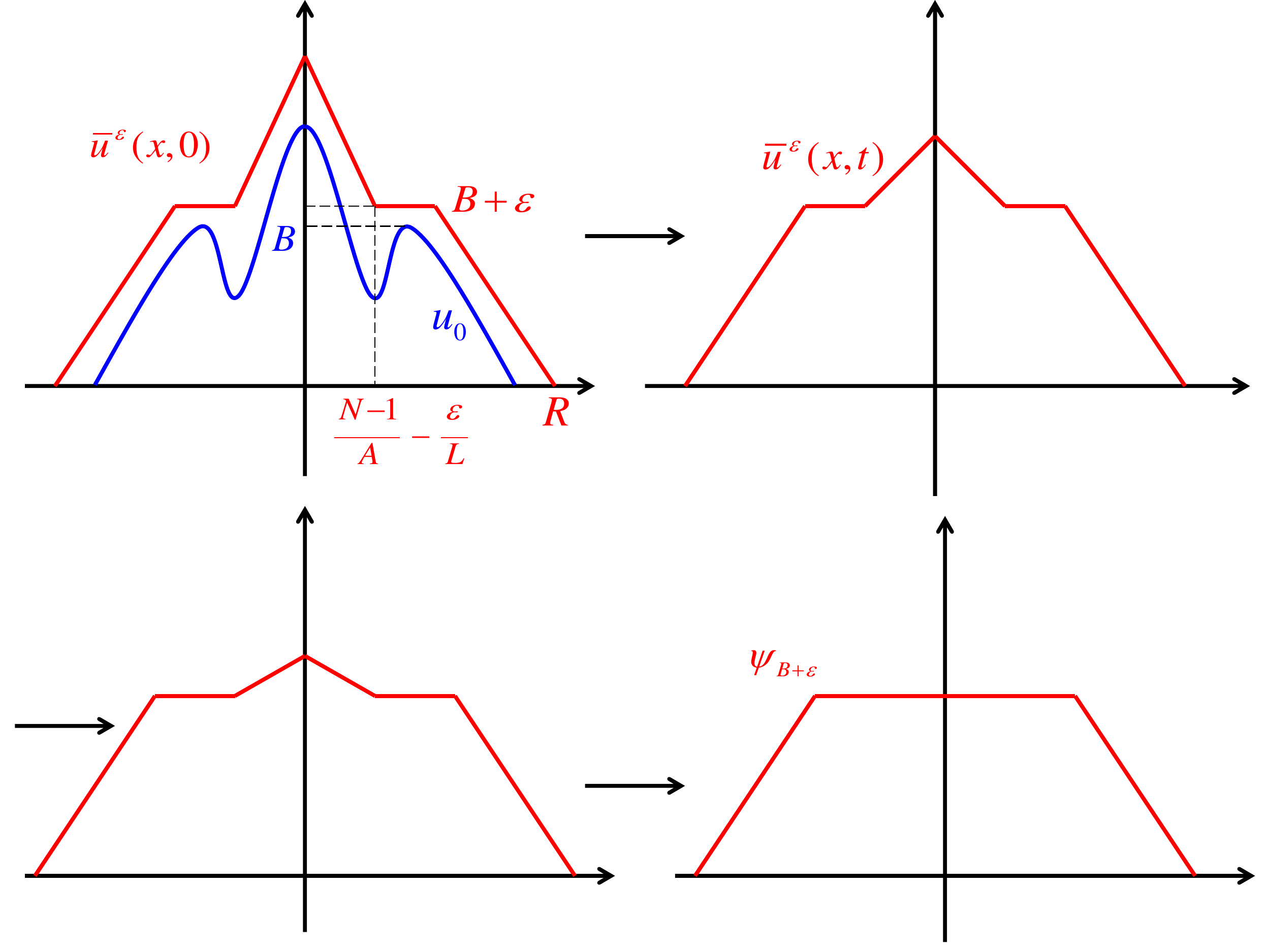}
		\vskip 0pt
		\caption{Supersolution $\overline{u}^{\varepsilon}$}
        \label{fig:super2}
	\end{center}
\end{figure}
 Recall $B=\max\limits_{(N-1)/A\leq |x|\leq R}u_0(x)\leq \lambda(R-(N-1)/A)$, and $u_0$ is $L$-Lipschitz continuous. It is easy to check 
$$
\psi^-\leq \overline{u}^{\varepsilon}\leq \psi^+\quad \text{and}\quad \overline{u}^{\varepsilon}(\cdot,0)\geq u_0.
$$
For $|x_0|>\frac{N-1}{A}-\frac{\varepsilon}{L}$, and $t_0>0$, obviously $\overline{u}^{\varepsilon}$ is supersolution in the viscosity sense.
\\
For $|x_0|<\frac{N-1}{A}-\frac{\varepsilon}{L}$, and $t_0>0$,
\begin{align*}
&\left(\overline{u}^{\varepsilon}_{t}-|D \overline{u}^{\varepsilon}|{\rm div}\left(\frac{D \overline{u}^{\varepsilon}}{|D \overline{u}^{\varepsilon}|}\right)-A|D \overline{u}^{\varepsilon}|\right)(x_0,t_0)\\
= \ &L e^{-\nu_2 t_0} \left(-\nu_2(\frac{N-1}{A}-\frac{\varepsilon}{L}-|x_0|)+\frac{N-1}{|x_0|}-A\right)\\
>\ &L e^{-\nu_2 t_0} \left(-\nu_2(\frac{N-1}{A}-\frac{\varepsilon}{L})+\frac{N-1}{\frac{N-1}{A}-\frac{\varepsilon}{L}}-A\right)>0.
\end{align*}
For $|x_0|=\frac{N-1}{A}-\frac{\varepsilon}{L}$, $t_0>0$, and test function $\varphi\in C^2(\mathbb{R}^N\times(0,\infty))$ satisfies  
$$
(\varphi-\overline{u}^{\varepsilon})(x_0,t_0)<(\varphi-\overline{u}^{\varepsilon})(x,t),\quad (x,t)\neq(x_0,t_0).
$$
Then 
$$
\varphi_t(x_0,t_0)\geq0.
$$
By abuse of notions, we write $\overline{u}^{\varepsilon}(x,t)=\overline{u}^{\varepsilon}(r,t)$ for $r=|x|$. It is easy to see 
$$
\lim\limits_{r\rightarrow(\frac{N-1}{A}-\frac{\varepsilon}{L})_-}\overline{u}^{\varepsilon}_r(r,t)= -Le^{-\nu_2t},
$$
and
$$
\lim\limits_{r\rightarrow(\frac{N-1}{A}-\frac{\varepsilon}{L})_+}\overline{u}^{\varepsilon}_r(r,t)=0\quad \text{or}\quad -\lambda.
$$
Then by Lemma \ref{lem:sub}, there exists $s\leq0$ such that
$$
D \varphi(x_0,t_0)=\frac{sx_0}{|x_0|}\quad \text{and}\quad {\rm tr}\left[\left((I-\frac{Du\otimes Du}{|Du|^2})\right)D^2 u \right](x_0,t_0)\leq \frac{(N-1)s}{|x_0|}.
$$
Since $|x_0|=\frac{N-1}{A}-\frac{\varepsilon}{L}< (N-1)/A$, and $s\leq 0$, we have
$$
\left(\varphi_t-|D \varphi|{\rm div}\left(\frac{D \varphi}{|D \varphi|}\right)-A|D \varphi|\right)(x_0,t_0)\geq -\frac{(N-1)s}{|x_0|}-A|s|=-\frac{(N-1)s}{|x_0|}+As> 0.
$$
Thus, $\overline{u}^{\varepsilon}$ is a supersolution of (\ref{eq:levelset}), (\ref{eq:initial}), and (\ref{eq:obs}).

Next, we construct a subsolution $\underline{u}=\underline{\varphi}\vee\psi^-$ to equation (\ref{eq:levelset}), (\ref{eq:initial}), and (\ref{eq:obs}), where
$$
\underline{\varphi}(x,t)=\left\{
\begin{array}{lcl}
-Le^{-\nu_3t}(r_0-|x|)+B,\quad |x|\leq r_0,\\
(L(r_0-|x|)+B)\vee (Le^{-\nu_4,t}(|x|-R)),\quad r_0<|x|\leq R,\\
0,\quad |x|>R.
\end{array}
\right.
$$
\begin{figure}[htbp]
	\begin{center}
            \includegraphics[height=8.0cm]{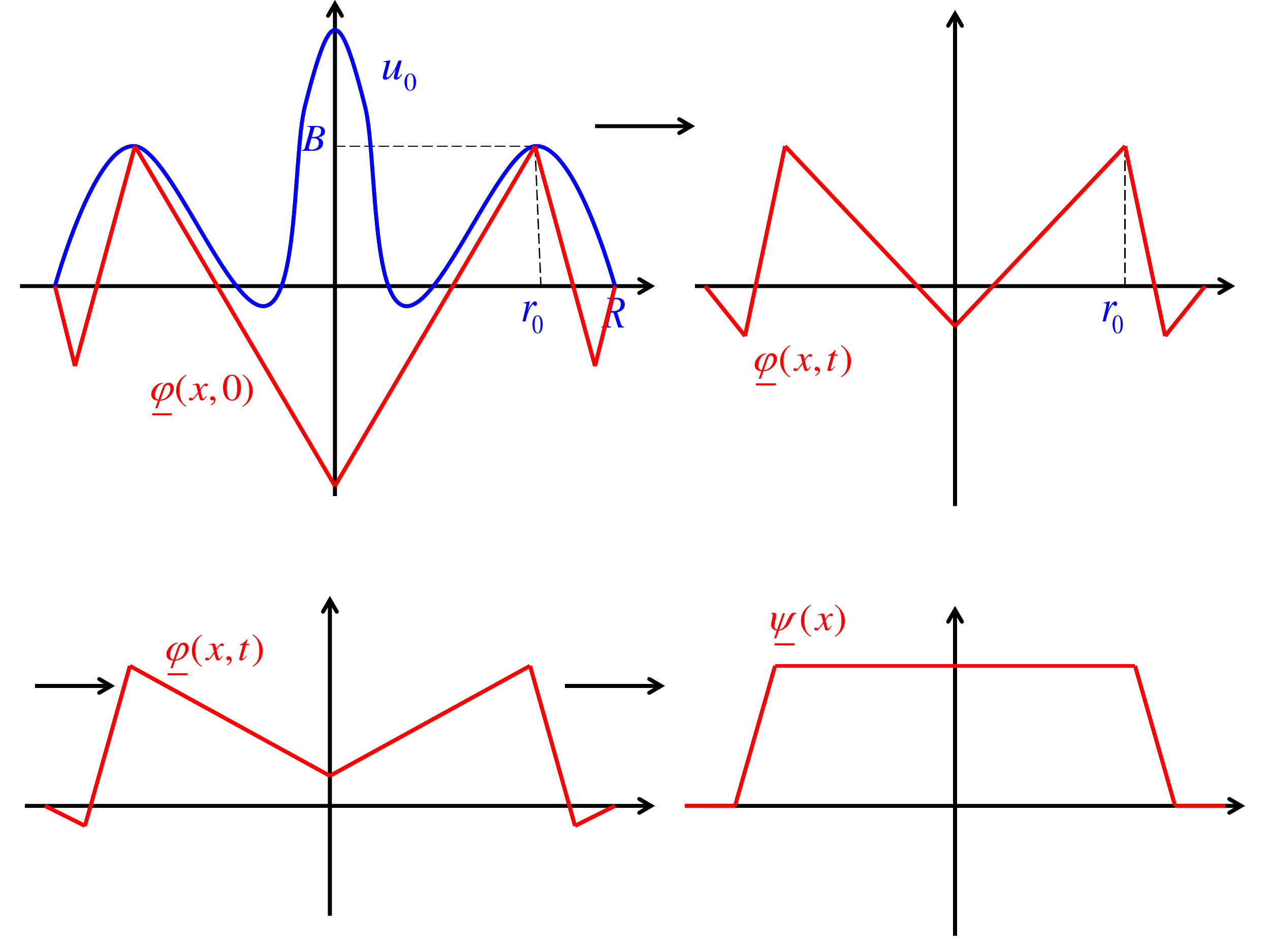}
		\vskip 0pt
		\caption{Subsolution $\underline{\varphi}$}
        \label{fig:sub2}
	\end{center}
\end{figure}
Here $r_0\in[(N-1)/A, R]$ satisfies $B=u_0(r_0)$,
$$
0<\nu_3<\frac{N-1+Ar_0}{r_0^2},
$$
and
$$
0<\nu_4<\frac{1}{R}\left(\frac{N-1}{R}+A\right).
$$
It is easy to check $u_0\geq \underline{u}(\cdot,0)$, and 
$$
\psi^-\leq \underline{u}\leq\psi^+.
$$
Clearly, $\underline{u}$ is a subsolution for $|x|\geq R$, $t>0$. We show the claim for $|x|<R$, $t>0$.
For $r_0<|x_0|<R$, $t_0>0$, if $\underline{\varphi}=Le^{-\nu_4,t}(|x|-R)$, we compute
\begin{multline*}
\left(\underline{\varphi}_t-|D \underline{\varphi}|{\rm div}\left(\frac{D \underline{\varphi}}{|D \underline{\varphi}|}\right)-A|D \underline{\varphi}|\right)(x_0,t_0)=L e^{-\nu_4 t_0} \left(\nu_4(R-|x_0|)-\frac{N-1}{|x_0|}-A\right)\\
\dis{\leq L e^{-\nu_4 t_0} \left(\nu_4 R-\frac{N-1}{R}-A\right)<0.}
\end{multline*}
For $r_0<|x_0|< R$, $t_0>0$, if $\underline{\varphi}=L(r_0-|x|)+B$, we compute 
\[
\left(\underline{\varphi}_{t}-|D \underline{\varphi}|{\rm div}\left(\frac{D \underline{\varphi}}{|D \underline{\varphi}|}\right)-A|D \underline{\varphi}|\right)(x_0,t_0)=L \left(\frac{N-1}{|x_0|}-A\right)
\dis{<L(\frac{N-1}{r_0}-A)\leq 0}.
\]
For $|x_0|<r_0$, $t_0>0$,
\begin{multline*}
\left(\underline{\varphi}_{t}-|D \underline{\varphi}|{\rm div}\left(\frac{D \underline{\varphi}}{|D \underline{\varphi}|}\right)-A|D \underline{\varphi}|\right)(x_0,t_0)=L e^{-\nu_3 t_0} \left(\nu_3(r_0-|x_0|)-\frac{N-1}{|x_0|}-A\right)\\
<L e^{-\nu_3 t_0} \left(\nu_3r_0-\frac{N-1}{r_0}-A\right)<0.
\end{multline*}
Finally, we need to check the case where $|x_0|=r_0$, $t_0>0$. Take a test function $\varphi\in C^2(\mathbb{R}^N\times(0,\infty))$ such that $\underline{\varphi}-\varphi$ has a strict maximum at $(x_0,t_0)$.
Then
$$
\varphi_t(x_0,t_0)\leq 0.
$$
By Lemma \ref{lem:sub}, there exists $s\in[-L,Le^{-\nu_3t_0}]$ such that
$$
D \varphi(x_0,t_0)=\frac{sx_0}{|x_0|}\quad \text{and}\quad {\rm tr}\left[\left((I-\frac{Du\otimes Du}{|Du|^2})\right)D^2 u \right]\geq \frac{(N-1)s}{|x_0|}.
$$
We use the above and the fact that $|x_0|=r_0\geq (N-1)/A$ to imply
$$
\left(\varphi_t-|D \varphi|{\rm div}\left(\frac{D \varphi}{|D \varphi|}\right)-A|D \varphi|\right)(x_0,t_0)\leq -\frac{(N-1)s}{|x_0|}-A|s|\leq
\frac{(N-1)|s|}{|x_0|}-A|s|\leq 0,
$$
 Therefore, $\underline{u}$ is a subsolution.
By the comparison principle,
$$
\underline{u}\leq u\leq \overline{u}^{\varepsilon}.
$$
Letting $t\rightarrow\infty$ and $\varepsilon\rightarrow0$ in this order,
$$
\underline{\psi}\leq v\leq B\wedge\psi^+=\psi_B.
$$
Here 
$$
\underline{\psi}(x)=\left\{
\begin{array}{lcl}
B,\quad |x|\leq r_0,\\
L(r_0-|x|)+B,\quad r_0<|x|\leq r_0+\frac{B}{L},\\
0,\quad r_0+\frac{B}{L}<|x|. 
\end{array}
\right.
$$
Proposition \ref{prop:symmetricstation} shows that $v=\psi_B$.
\begin{figure}[htbp]
	\begin{center}
            \includegraphics[height=8.0cm]{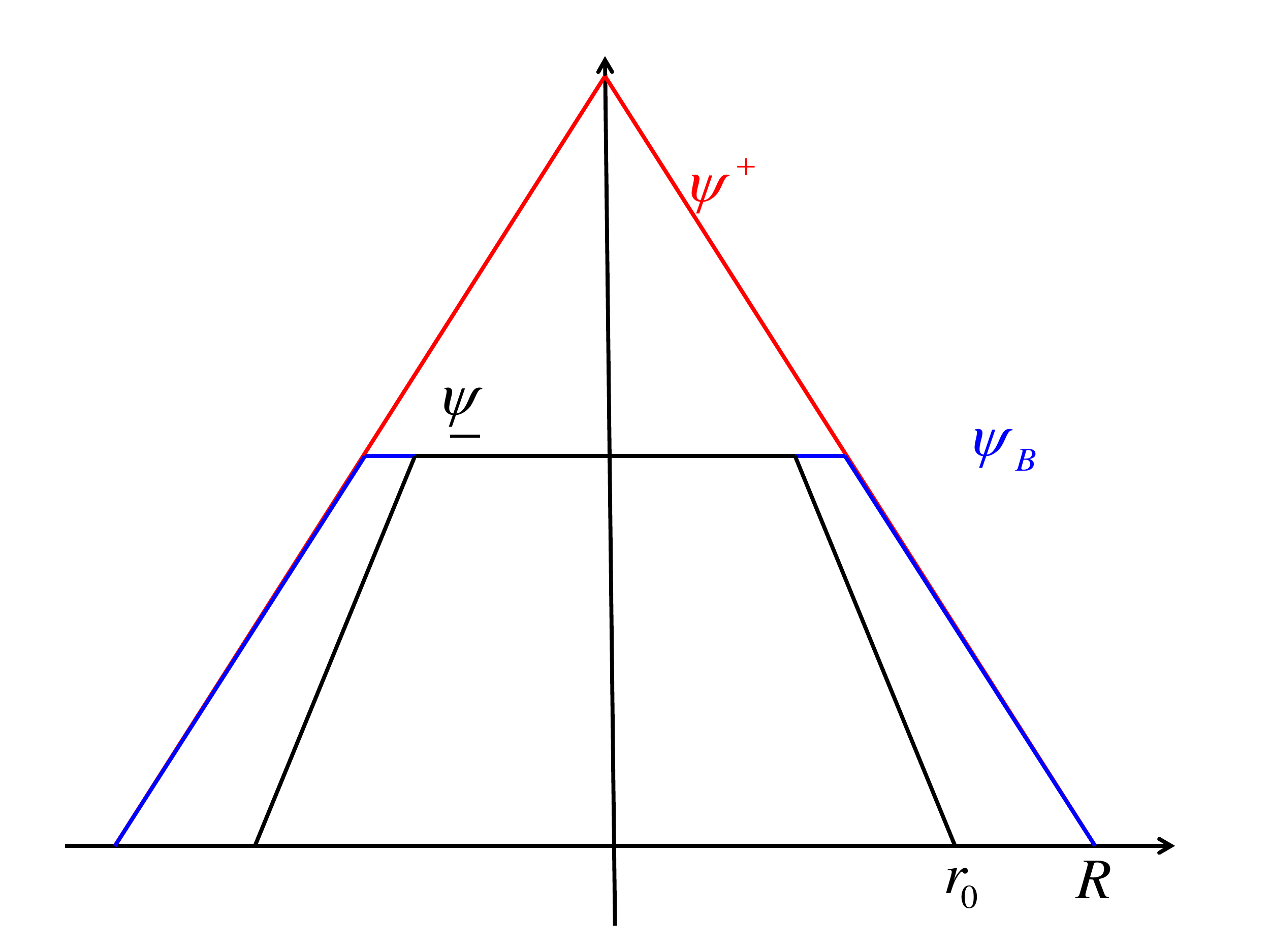}
		\vskip 0pt
		\caption{$\underline{\psi}$ and $\psi_B$}
        \label{fig:sta2}
	\end{center}
\end{figure}
\end{proof}

\begin{rem}
Theorem \ref{thm:asymcircle} also holds for $\psi^-\leq 0$ in $\mathbb{R}^N$ by using approximation arguments. We leave the proof to the readers.
\end{rem}

\begin{rem}
We give another idea to prove Theorem \ref{thm:asymcircle} as following. Recall 
$$
u(x,t)=u(|x|,t)=u(r,t).
$$
For $r>(N-1)/A$ such that $\psi^-(r)<u(r,t)<\psi^+(r)$, we have the following equation 
$$
u_t(r,t)=\frac{N-1}{r}u_r+A|u_r|\geq0.
$$
Therefore, at $r_0$, we yield that $t\mapsto u(r_0,t)$ is non-decreasing. This immediately gives us the desired conclusion for $r>(N-1)/A$.
\end{rem}

\section{Appendix}
\begin{lem}\label{lem:subproof} Let $N=2$. The function $\underline{\psi}$ is a subsolution of problem
\begin{equation}\label{eq:app}
\left\{
\begin{array}{lcl}
u_t-|D u|{\rm div}\dis{\left(\frac{D u}{|D u|}\right)}-|D u|=0,\quad (x,t)\in B_2(O)\times(0,\infty),\\
u(x,0)=u_0(x),\quad x\in \overline{B_2(O)},\\
u(x,t)=0,\quad (x,t)\in\partial B_2(O)\times[0,\infty),
\end{array}
\right.
\end{equation}
and satisfy
$$
\left\Vert\underline{\psi}\right\Vert_{C^{0,1}\left(\,\overline{B_2(O)}\times[0,\infty)\right)}=\infty,
$$
where $u_0\geq\underline{\psi}(\cdot,0)$ in $ \overline{B_2(O)}$, and
$$
\underline{\psi}(x,t)=\left\{
\begin{array}{lcl}
1,\quad |x|\leq 2-\frac{1}{2}e^{-\frac{1}{6}t},\\
2e^{\frac{1}{6}t}(2-|x|),\quad 2-\frac{1}{2}e^{-\frac{1}{6}t}<|x|\leq 2.\\
\end{array}
\right.
$$ 
\end{lem}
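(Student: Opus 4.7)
The plan is to check the three defining conditions of a viscosity subsolution separately. The Lipschitz blow-up is immediate: in the outer shell $\{2 - \tfrac{1}{2}e^{-t/6} < |x| \leq 2\}$, $\underline{\psi}$ is smooth with $|D\underline{\psi}| = 2e^{t/6} \to \infty$ as $t \to \infty$, forcing $\|\underline{\psi}\|_{C^{0,1}} = \infty$. The boundary values $\underline{\psi}|_{\partial B_2(O)} = 0$ and the initial inequality $\underline{\psi}(\cdot, 0) \leq u_0$ are built into the definition and the hypothesis. So the content is to verify the PDE subsolution inequality in $B_2(O) \times (0, \infty)$. Away from the interface $\Sigma = \{|x| = R(t)\}$ with $R(t) := 2 - \tfrac{1}{2}e^{-t/6}$, the function is smooth: in the inner region $\underline{\psi} \equiv 1$ and the inequality is trivial, while in the outer region a direct computation using $\underline{\psi}_t = \tfrac{1}{3}e^{t/6}(2-|x|)$, $|D\underline{\psi}| = 2e^{t/6}$ and $\operatorname{div}(D\underline{\psi}/|D\underline{\psi}|) = -1/|x|$ (with $N=2$) reduces the inequality to $\tfrac{2-r}{3} + \tfrac{2}{r} - 2 \leq 0$ on $r \in [3/2, 2]$, which is immediate by monotonicity in $r$ and the value $-1/2$ at $r = 3/2$.

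The delicate step is the viscosity test on the corner $\Sigma$. Fix $(\hat x, \hat t) \in \Sigma$ and a smooth $\varphi$ such that $\underline{\psi} - \varphi$ has a strict local maximum at $(\hat x, \hat t)$; set $R = R(\hat t)$ and $\nu = \hat x/R$. I would first apply Lemma \ref{lem:sub}(ii) to the frozen-time radial profile $r \mapsto \underline{\psi}(r, \hat t)$, whose one-sided slopes at $r = R$ are $a = 0 > b = -2e^{\hat t/6}$; this yields some $s \in [-2e^{\hat t/6}, 0]$ with $D\varphi(\hat x, \hat t) = s\nu$ and $\tau^{T} D^2\varphi\,\tau \geq s/R$ for every unit $\tau \perp \nu$. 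Next I pin down $\varphi_t$ by testing $\varphi \geq \underline{\psi}$ along the spacetime curve $\tau \mapsto (R(\hat t + \tau)\nu,\, \hat t + \tau) \subset \Sigma$, on which $\underline{\psi} \equiv 1$; comparing the linear order as $\tau \to 0^{\pm}$ from both sides forces the exact identity $\varphi_t(\hat x, \hat t) = -R'(\hat t)\, s$.

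With these inputs the subsolution condition $\varphi_t \leq F^{*}(D\varphi, D^2\varphi)$ reduces, for $s < 0$, to $s\bigl(1 - 1/R - R'(\hat t)\bigr) \leq 0$, and since $s \leq 0$ it suffices to show $1 - 1/R - R'(\hat t) \geq 0$. Substituting $u = e^{-\hat t/6} \in (0, 1]$ reduces this to the one-variable inequality $u^2 - 16u + 24 \geq 0$, which holds because the smaller root $8 - 2\sqrt{10}$ exceeds $1$. The case $s = 0$ gives $\varphi_t = 0$ and $D\varphi(\hat x, \hat t) = 0$; Lemma \ref{lem:sub}(ii) still yields $\tau^{T} D^2\varphi\,\tau \geq 0$, so $F^{*}(0, D^2\varphi) = \lambda_{\max}(D^2\varphi) \geq 0 = \varphi_t$. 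The main obstacle is precisely this corner analysis: the naive a priori bound $\varphi_t \in [0, 1/6]$ is by itself too weak, because the lower bound on $F^{*}$ from Lemma \ref{lem:sub} also degenerates as $|s| \to 0$; the exact identification $\varphi_t = -R'\,s$ is what couples these two degenerations and makes the estimate close.
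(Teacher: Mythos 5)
Your proof is correct and follows the same route as the paper: verify the inequality directly in the two smooth regions, then at the moving corner $|x|=R(t)$ combine Lemma \ref{lem:sub}(ii) (frozen time) with differentiation of $\varphi$ along the interface curve to relate $\varphi_t$ to $s$. You are in fact somewhat more careful than the paper's own write-up, which records the outer slope as $-\tfrac{1}{2}e^{t_0/6}$ instead of $-2e^{t_0/6}$, uses the one-sided bound $\varphi_t \leq -R'(\hat t)s$ in place of your exact identity, and does not separately address the degenerate case $s=0$ (where the displayed quantity is $0$, not $<0$, and the subsolution test must be read through $F^{*}(0,D^{2}\varphi)$ as you do).
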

\begin{proof} Obviously, $\psi(\cdot,t)=0$ on $\partial B_2(O)$. There is nothing to check if $|x_0|<2-\frac{1}{2}e^{-\frac{1}{6}t_0}$, and $t_0>0$. 

If $2-\frac{1}{2}e^{-\frac{1}{6}t_0}<|x_0|<2$, and $t_0>0$, noting $|x_0|>\frac{3}{2}$, we compute
\begin{multline*}
\left(\underline{\psi}_t-|D \underline{\psi}|{\rm div}\left(\frac{D \underline{\psi}}{|D \underline{\psi}|}\right)-|D \underline{\psi}|\right)(x_0,t_0)=2 e^{\frac{1}{6} t_0} \left(\frac{1}{6}(2-|x_0|)+\frac{1}{|x_0|}-1\right)\\
\dis{\leq 2 e^{\frac{1}{6} t_0} \left(\frac{1}{6}(2-|x_0|)+\frac{1}{|x_0|}-1\right)}\leq\dis{ 2 e^{\frac{1}{6} t_0} \left(\frac{1}{12}+\frac{2}{3}-1\right)}<0.
\end{multline*}
Finally, we check the case where $|x_0|=2-\frac{1}{2}e^{-\frac{1}{6}t_0}$, and $t_0>0$. Choose a test function $\varphi\in C^2(\mathbb{R}^2\times(0,\infty))$ such that $\underline{\psi}-\varphi$ has a strict maximum at $(x_0,t_0)$.
By Lemma \ref{lem:sub}, there exists $s\in[-\frac{1}{2}e^{\frac{1}{6}t_0},0]$ such that
$$
D \varphi(x_0,t_0)=\frac{sx_0}{|x_0|}\quad \text{and}\quad {\rm tr}\left[\left((I-\frac{Du\otimes Du}{|Du|^2})\right)D^2 u \right]\geq \frac{s}{|x_0|}.
$$
Let $x(t)=(2-\frac{1}{2}e^{-\frac{1}{6}t})\frac{x_0}{|x_0|}$. For $t<t_0$, $1=\underline{\psi}(x(t),t)<\varphi(x(t),t)$, and $1=\varphi(x(t_0),t_0)$. Therefore, 
$$
0\geq\frac{\rm d}{{\rm d} t}\varphi(x(t),t)\mid_{t=t_0}=\frac{1}{12}e^{-\frac{1}{6}t_0}\frac{x_0}{|x_0|}\cdot D\varphi(x_0,t_0)+\varphi_t(x_0,t_0)=\frac{1}{12}se^{-\frac{1}{6}t_0}+\varphi_t(x_0,t_0).
$$
Thus, for $s\leq0$, 
\begin{multline*}
\left(\varphi_t-|D \varphi|{\rm div}\left(\frac{D \varphi}{|D \varphi|}\right)-|D \varphi|\right)(x_0,t_0)\leq -\frac{1}{12}se^{-\frac{1}{6}t_0}-\frac{s}{|x_0|}-|s|\\
\dis{\leq-\frac{s}{12}e^{-\frac{1}{6}t_0}-\frac{s}{2}+s\leq-\frac{s}{12}-\frac{s}{2}+s}<0.
\end{multline*}
Hence, $\underline{\psi}$ is a subsolution.

Next we compute
$$
\frac{\partial}{\partial {\bf n}} \underline{\psi}(x,t),\quad \text{for} \ x\in\partial B_2(O),
$$
where ${\bf n}$ denotes the outward normal vector on $\partial B_2(O)$.
For $x_0\in \partial B_2(O)$, $t>0$,
$$
\frac{\partial}{\partial {\bf n}} \underline{\psi}(x_0,t)=-2e^{\frac{1}{6}t}.
$$
Therefore,
$$
\lim\limits_{t\rightarrow\infty}\left|\frac{\partial}{\partial {\bf n}} \underline{\psi}(x_0,t)\right|=\infty.
$$
We complete the proof.
\end{proof}
\begin{cor}
Assume $u$ is a solution of problem {\rm (\ref{eq:app})} satisfies $u=0$ on the boundary in the 
classical sense. If $u_0\geq \underline{\psi}(\cdot,0)$ on $\partial B_2(O)$, then 
$$
\left\Vert u\right\Vert_{C^{0,1}\left(\,\overline{B_2(O)}\times[0,\infty)\right)}=\infty.
$$
\end{cor}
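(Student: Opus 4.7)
The plan is to argue by contradiction, exploiting Lemma \ref{lem:subproof}: assume $u$ is $K$-Lipschitz on $\overline{B_2(O)}\times[0,\infty)$ for some finite $K$, and use the subsolution $\underline{\psi}$ to force $K$ to be arbitrarily large.

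First I would establish the pointwise bound $u \geq \underline{\psi}$ on $\overline{B_2(O)}\times[0,\infty)$. By Lemma \ref{lem:subproof}, $\underline{\psi}$ is a viscosity subsolution of (\ref{eq:app}); by assumption, $u_0 \geq \underline{\psi}(\cdot,0)$; and on the parabolic boundary $\partial B_2(O)\times[0,\infty)$ both functions vanish ($u$ by the classical boundary condition, $\underline{\psi}$ by its definition). The standard comparison principle for the forced level-set mean curvature flow equation with Dirichlet data then yields $u \geq \underline{\psi}$ everywhere.

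Next I would exploit the fact that the inward slope of $\underline{\psi}$ at the boundary blows up in $t$. Fix $x_0 \in \partial B_2(O)$ with outward unit normal $\mathbf{n} = x_0/2$, and for $t > 0$ and $0 < r < \tfrac{1}{2}e^{-t/6}$ set $x_r := x_0 - r\mathbf{n}$, so that $|x_r| = 2-r$ lies in the region where $\underline{\psi}(x,t) = 2e^{t/6}(2-|x|)$. Then
$$
u(x_r,t) - u(x_0,t) = u(x_r,t) \geq \underline{\psi}(x_r,t) = 2e^{t/6}r,
$$
using $u(x_0,t) = 0$ in the first equality.

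Finally, if $u$ were $K$-Lipschitz in the spatial variable uniformly in $t$, the left side would be at most $Kr$, so $K \geq 2e^{t/6}$ for every $t > 0$, which is absurd. Hence $\|u\|_{C^{0,1}(\overline{B_2(O)}\times[0,\infty))} = \infty$. The only delicate point is the invocation of the comparison principle: one needs a version valid for merely continuous sub- and supersolutions of the Dirichlet problem for (\ref{eq:app}), but since $\underline{\psi}$ is continuous, matches the boundary values of $u$ (both identically zero on $\partial B_2(O)$), and is dominated by $u_0$ at $t=0$, the classical viscosity comparison result applies without modification.
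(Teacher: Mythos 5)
Your argument is correct and is essentially the one the paper intends (the paper leaves the corollary's proof implicit, having sketched the same reasoning in the introduction): comparison with the subsolution $\underline{\psi}$ from Lemma \ref{lem:subproof} gives $u\geq\underline{\psi}$, and since both vanish on $\partial B_2(O)$, the inward difference quotient of $u$ at the boundary is at least $2e^{t/6}$, which rules out any finite Lipschitz constant. Your explicit difference-quotient computation is just the quantitative version of the paper's normal-derivative blow-up $\partial\underline{\psi}/\partial\mathbf{n}=-2e^{t/6}$, so the two proofs coincide.
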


\section{Open problems}

In this paper, we get precise behaviors of the limiting profile for special obstacles, and initial data in Theorem \ref{thm:asymcircle}. The following problems are open, and of great interests.

\noindent {\bf Problem A.} Assume that $\psi^\pm$, and $u_0$ are radially symmetric (but not of the precise forms of Theorem \ref{thm:asymcircle}). Study the limiting profile and its dependence on $\psi^\pm$, and $u_0$.

\noindent {\bf Problem B.} Let $\psi^\pm$ be as in Theorem \ref{thm:asymcircle}. Study the limiting profile and its dependence on $\psi^\pm$, and $u_0$.

It is important noting that we do not assume $u_0$ is radially symmetric in Problem B, and therefore, $u$ needs not be radially symmetric. In order to understand clearly the behavior of $v(x)=\lim_{t \to \infty} u(x,t)$, we need to characterize all stationary solutions of (\ref{eq:levelset}$e$), and (\ref{eq:obs}), which is not yet known in the literature.

Finally, a most general, and most challenging problem is as following.

\noindent {\bf Problem C.} Characterize all stationary solutions of (\ref{eq:levelset}$e$), and (\ref{eq:obs}) in the general setting. Use this to study the limiting profile.

\end{document}